\documentclass[11pt, a4paper]{amsart}
\usepackage{amssymb,amsmath,amscd,url}
\usepackage{tikz-cd}
\usepackage{graphicx}

\usepackage[makeroom]{cancel}

\newtheorem{theorem}{Theorem}[section] 
\newtheorem{lemma}[theorem]{Lemma}     
\newtheorem{corollary}[theorem]{Corollary}
\newtheorem*{theoremn}{Corollary \ref{duality}}

\newtheorem*{corollaryn}{Corollary \ref{affine theorem}}
\newtheorem{proposition}[theorem]{Proposition}
\theoremstyle{remark}
\newtheorem{remark}[theorem]{Remark}
\theoremstyle{definition}
\newtheorem{definition}[theorem]{Definition}
\usepackage{amsfonts,url}
\newcommand{\cl}{\mathcal{C}\ell}

\newcommand{\la}{\langle}
\newcommand{\ra}{\rangle}
\newcommand{\ol}{\overline}
\newcommand{\e}{\mathbf{e}}
\newcommand{\bQ}{\mathbf{Q}}
\newcommand{\bF}{\mathbf{F}}
\newcommand{\eps}{\varepsilon}

\DeclareMathOperator{\coker}{coker}

\renewcommand{\simeq}{\cong}
\newcommand{\tensor}{\hspace{0.1em}\widehat{\otimes}\hspace{0.15em}}

\def\Tr{\mathrm{Tr}}
\def\fTr{\mathfrak{Tr}}

\newcommand{\Z}{\mathbb{Z}}
\newcommand{\C}{\mathbb{C}}

\def\GL{{\rm GL}}
\def\O{{\rm O}}

\def\cE{{\mathcal E}}
\def\cF{{\mathcal F}}

\def\Ext{{\rm Ext}}

\def\PSU{{\rm PSU}}

\def\SO{{\rm SO}}

\def\SU{{\rm SU}}
\def\GL{{\rm GL}}

\def\U{{\rm U}}

\def\Hom{{\rm Hom}}

\def\cP{{\mathcal P}}
\def\cQ{{\mathcal Q}}
\def\cS{{\mathcal S}}

\def\cU{{\mathcal U}}

\def\cZ{{\mathcal Z}}

\def\bH{{\mathbf H}}
\def\bL{{\mathbf L}}
\def\bS{{\mathbf S}}

\def\fb{{\mathfrak b}}
\def\fa{{\mathfrak a}}

\def\fx{{\mathfrak x}}
\def\fy{{\mathfrak y}}

\def\ft{{\mathfrak t}}

\def\fG{{\mathfrak G}}

\def\ft{{\mathfrak t}}

\def\Sp{{\rm Sp}}

\def\U{{\mathbf U}}

\def\id{\mathrm{id}}

\def\alphaCt{\mathbf{A}_{C_0(\ft)}}

\def\rtime{\!\rtimes\!}

\title[Extended affine Weyl groups]{Poincar\'e Duality and Langlands Duality for Extended Affine Weyl groups}

\author{Graham A. Niblo, Roger Plymen and Nick Wright}
\address{Mathematical Sciences, University of Southampton, SO17 1BJ,  England}
\email{g.a.niblo@soton.ac.uk, r.j.plymen@soton.ac.uk, wright@soton.ac.uk}
\thanks{The third author was partially supported by EPSRC grant EP/J015806/1.}

\begin{document}
\begin{abstract}
In this paper we construct an equivariant Poincar\'e duality between dual tori equipped with finite group actions. We use this to demonstrate that Langlands duality induces a rational isomorphism between the group $C^*$-algebras of extended affine Weyl groups at the level of $K$-theory.
\end{abstract}

\maketitle

\tableofcontents

\section*{Introduction}
Let $T$ be a compact torus and let $W$ be a finite group acting on $T$ with fixed point.  We construct a $W$-equivariant degree $0$ Poincar\'e duality between $C(T)$ and $C(T^\vee)$, where $T^\vee$ denotes the dual torus equipped with the dual action of $W$.

Moreover we show that there is a non-equivariant Poincar\'e duality between the crossed product algebras $C(T)\rtimes W$ and $C(T^\vee)\rtimes W$. Indeed we provide a general mechanism to descend equivariant Poincar\'e duality to Poincar\'e duality for crossed products. As far as we are aware this does not appear elsewhere in the literature. 

In the case when $W$ is trivial, our degree $0$ duality is connected to the Baum-Connes assembly map in the following way:  Let $T$ be a compact torus (equipped with the structure of a Lie group), and let $X^*(T), X_*(T)$ be the groups of characters and cocharacters respectively.  By definition the dual torus $T^\vee$ is the torus such that $X^*(T^\vee)=X_*(T)$ and $X_*(T^\vee)=X^*(T)$. Whence the Pontryagin dual of $X_*(T)$ is the torus $T^\vee$. The Baum-Connes assembly map for $X_*(T)$ gives a degree $0$ isomorphism
\[
K_*(T)\xrightarrow{\cong}K_*(C^*X_*(T))\cong K^*(T^\vee).
\]
This isomorphism agrees with our Poincar\'e duality, though this is not immediate from the definition of the two maps, see Section \ref{baumconnesconnection}.

For an isometric action of a group $W$ on a closed Riemannian manifold $M^n$, Kasparov's Poincar\'e duality \cite{K}, by contrast with our Poincar\'e duality, provides an isomorphism from $KK_W(C(M), \C)$ to $KK_W(\C,C_\tau(M))$ where $C_\tau(M)$ denotes the algebra of continuous sections of the Clifford bundle for the cotangent bundle $\tau$ of $M$.  See also Echterhoff \emph{et al}, \cite{EEK}. If the action is trivial and $M$ is a spin manifold, then the twisting by the Clifford algebra simply induces a dimension shift so Kasparov's Poincar\'e duality has degree $n$ modulo $2$. In the case where $M$ is a torus and $W$ is trivial, this is connected to our Poincar\'e duality via the Dirac-dual-Dirac method, which addresses the dimension shift. In the equivariant case the group acts nontrivially on the Clifford bundle, so the appearance of this bundle no longer simply gives a dimension shift. Indeed, for example, letting $\Z/2\Z$ act by complex conjugation on the $1$-dimensional torus $U(1)$, then $KK_W(\C,C_\tau(U(1)))$ is $\Z^3$ in dimension $0$ and $0$ in dimension $1$, which agrees with the \emph{unshifted} $K$-theory group $KK_W(\C,C(U(1)))$.

In this paper in order to describe the $KK$-cycles defining our Poincar\'e dualities explicitly, we have given direct proofs of the relevant properties of these cycles and their pairings. As remarked by the referee, it is in principle possible to obtain these elements by combining Kasparov's Poincar\'e duality elements with the Dirac and dual-Dirac cycles. Providing full details of this reduction to the known results is in itself somewhat complicated and we have opted to give the direct, self-contained argument.

\bigskip

As an application of our Poincar\'e duality we consider the case where $T$ is the maximal torus in a compact connected semi-simple Lie group and $W$ is the Weyl group. The dual torus is then the maximal torus in the Langlands dual Lie group. In general there is no $W$-equivariant homeomorphism between the two tori, hence \emph{a priori} one one would not expect them to have the same equivariant $K$-theory. However our Poincar\'e duality gives a canonical pairing between these two equivariant $K$-theory groups, and hence ignoring torsion these groups are isomorphic. Moreover our Poincar\'e duality also provides a canonical pairing between the $K$-theory of the extended affine Weyl groups of the original Lie group and its Langlands dual. This again yields an isomorphism up to torsion in $K$-theory, although these discrete groups are not typically isomorphic. In \cite{NPW} we explore this phenomenon in further detail and give detailed computations of these $K$-theory groups in a number of cases.

The connection between $T$-duality and Langlands duality has been studied by Daenzer-van Erp who showed that Langlands duality for complex reductive Lie groups can be implemented by $T$-dualization for groups whose simple factors are of type A, D or E, \cite{DvE}. This was generalised by Bunke-Nikolaus, see \cite{BN}. The study of $T$-duality in these papers, involves examining the Lie group viewed as a principal bundle of tori via the action of the maximal torus on the group. Here by contrast we study the Weyl group action on the maximal torus, instead of the maximal torus action on the Lie group. In both cases there is a natural duality arising from Langlands duality of root systems and the possible unification of these two perspectives would provide an interesting future project.

We would like to thank Maarten Solleveld for his helpful comments on the first version of this paper. We would also like to thank the editor Jonathan Rosenberg and the referee, whose careful reading of our paper led to the examination, in Section \ref{baumconnesconnection}, of the relationship between our Poincar\'e duality and the Baum-Connes conjecture.  We are also grateful to the referee for suggesting the simplified proof of Theorem \ref{descent of PD} presented here.

\section{Statement of Results}

Let $W$ be a finite group acting isometrically with a global fixed point on a flat Riemannian torus $T$, and let $\ft$ denote the universal cover of $T$.  The notation reflects the observation that $T$ can be equipped with the structure of an abelian Lie group with identity at the fixed point, and $\ft$ is then its Lie algebra which inherits a linear isometric action of $W$. Denote by $\Gamma$ the lattice in $\ft$ mapping to the identity in $T$, or equivalently the fundamental group of $T$. This inherits an action of $W$ from $\ft$.

Now let $T^\vee$ be the dual torus of $T$, that is, the group of characters of $\Gamma$. We similarly denote by $\ft^*$ the Lie algebra of $T^\vee$ (which is the dual space of $\ft$) and denote by $\Gamma^\vee$ the fundamental group of $T^\vee$. The action of $W$ on $T$ induces dual actions on $T^\vee,\ft^*$ and $\Gamma^\vee$.

Let $\cP\in KK_W(\C,C(T)\tensor C(T^\vee))$ denote the class of the Poincar\'e line bundle. To construct our Poincar\'e duality we will, in Section \ref{K-homology element}, define an element $\cQ\in KK_W(C(T^\vee)\tensor C(T), \C)$ given by a triple $(L^2(\ft)\tensor \cS,\rho,Q_0)$, for which $\cP,\cQ$ is a Poincar\'e duality pair. The operator is
\[
Q_0=\frac{\partial}{\partial y^j}\otimes \eps^j -2\pi i y^j\otimes \e_j
\]
where $\{\eps^j,\e_j:j=1,\dots, n\}$ denotes a suitable basis for $\ft^*\times\ft$ acting on a space of spinors $\cS$. The representation $\rho$ is defined by
\[
\rho(\sum_{\gamma\in\Gamma} a_\gamma e^{2\pi i\la \eta,\gamma\ra} \otimes f)\xi\otimes s = \sum_{\gamma\in\Gamma} a_\gamma \gamma\cdot(\tilde{f}\xi) \otimes s
\]
where $\gamma$ acts by translation on $L^2(\ft)$, $\tilde{f}$ denotes the lift of $f$ to a periodic function on $\ft$ and $\eta$ denotes a variable in $\ft^*$.

\newcommand{\PDTheorem}
{Let $T$ be a torus with flat Riemannian metric and $T^\vee$ its dual. Suppose that $W$ is a finite group acting isometrically on $T$ with a global fixed point. The elements $\cP,\cQ$ define a $W$-equivariant Poincar\'e duality in $KK$-theory from $C(T)$ to $C(T^\vee)$ and there is a `descended' non-equivariant Poincar\'e duality from $C_0(\ft)\rtimes (\Gamma\rtimes W)$ to $C_0(\ft^*)\rtimes (\Gamma^\vee\rtimes W)$.  This is natural in the sense that there is a commutative diagram
\[
\begin{CD}
KK^*_W(C(T),\C) @>\cong>> KK^*_W(\C,C(T^\vee)).\\
@VV\cong V @V\cong VV \\
KK^*(C_0(\ft)\rtimes (\Gamma\rtimes W),\C) @>\cong>> KK^*(\C,C_0(\ft^*)\rtimes(\Gamma^\vee\rtimes W))\\
\end{CD}
\]
where
\begin{itemize}
\item the top and bottom maps are induced by the Poincar\'e dualities,
\item the left-hand map is the composition of the $W$-equivariant Morita equivalence $C(T)\sim C_0(\ft)\rtimes \Gamma$ with the dual Green-Julg isomorphism in $K$-homology,
\item the right-hand map is its dual, i.e. the composition of the Morita equivalence $C(T^\vee)\sim C_0(\ft^*)\rtimes \Gamma^\vee$ with the Green-Julg isomorphism in $K$-theory.
\end{itemize}
}

\begin{theorem}\label{PD}
\PDTheorem
\end{theorem}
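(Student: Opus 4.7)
The proof naturally splits into three parts, mirroring the three assertions: (i) $(\cP,\cQ)$ is an equivariant Poincar\'e duality pair for $C(T)$ and $C(T^\vee)$; (ii) there is a descended non-equivariant Poincar\'e duality between the iterated crossed products; (iii) the square commutes. My plan is to handle (i) by a direct Kasparov product calculation exploiting the Bott--Dirac form of $Q_0$, obtain (ii) as a general consequence of (i) via a descent principle which I will state and prove as a separate lemma, and finally extract (iii) from the naturality of the construction.

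For (i), I must verify the two zig-zag identities
\[
\cP\otimes_{C(T^\vee)}\cQ=1_{C(T)}\in KK_W(C(T),C(T)),\qquad \cP\otimes_{C(T)}\cQ=1_{C(T^\vee)}\in KK_W(C(T^\vee),C(T^\vee)).
\]
The key observation is that the representation $\rho$ realises the canonical Heisenberg-type representation of the pair $(\Gamma,T^\vee)$ on $L^2(\ft)$, with $\Gamma$ acting by translation and $C(T^\vee)$ acting through Fourier duality; tensoring by $\cS$ and adding $Q_0=\tfrac{\partial}{\partial y^j}\otimes\eps^j-2\pi i y^j\otimes\e_j$ yields the equivariant Bott--Dirac operator on $\ft$. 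Pairing the Poincar\'e line bundle class with $\cQ$ over $C(T^\vee)$ converts $C(T^\vee)$-valued fibres of $\cP$ into a family of twists of $Q_0$ parametrised by $T$; an equivariant Mehler-type computation (with Gaussian ground states spanning the kernel) identifies this family with the identity cycle on $C(T)$, by an essentially Bott-periodicity argument. The opposite product is handled symmetrically. Verifying that all constructions and homotopies respect the $W$-action requires only that $W$ acts linearly and isometrically on $\ft$ fixing the origin, which is our standing assumption; this is the technical heart of the argument and the main obstacle.

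For (ii), I will prove a general descent lemma: if $A$ and $B$ are $W$-$C^*$-algebras and $(\cP',\cQ')$ is an equivariant Poincar\'e duality pair for $A,B$ (with $W$ a finite group acting trivially on $\C$), then the Kasparov descent $j^W$, combined with the canonical $*$-homomorphisms relating $(A\tensor B)\rtimes W$ to $(A\rtimes W)\tensor(B\rtimes W)$ and the natural element in $KK(\C,C^*W)$, produces a non-equivariant Poincar\'e duality between $A\rtimes W$ and $B\rtimes W$. Applied to $(C(T),C(T^\vee))$ and composed with the $W$-equivariant Morita equivalences $C(T)\sim C_0(\ft)\rtimes\Gamma$ and $C(T^\vee)\sim C_0(\ft^*)\rtimes\Gamma^\vee$, using the identification $(C_0(\ft)\rtimes\Gamma)\rtimes W\cong C_0(\ft)\rtimes(\Gamma\rtimes W)$, this yields the asserted non-equivariant duality.

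Finally for (iii), both vertical maps in the square are by definition compositions of the Morita equivalence with (dual) Green--Julg, and the Kasparov product commutes with all of these functors (descent, Morita, and the Green--Julg identifications, which are themselves given by Kasparov products with canonical classes). Commutativity therefore reduces to the fact, built into the construction in (ii), that the descended duality elements are the images of $(\cP,\cQ)$ under descent. I expect the main obstacle to be, as noted, the explicit Kasparov product computation in (i); once that is in hand, parts (ii) and (iii) should be formal.
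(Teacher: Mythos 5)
Your overall route coincides with the paper's: a direct Kasparov-product computation for $\cP\otimes_{C(T^\vee)}\cQ$ (connection for $Q_0$, homotopy of representations, reduction to the one-dimensional Gaussian kernel), a general descent lemma for finite $W$ built from Kasparov descent, the diagonal map $(A\tensor B)\rtimes W\to (A\rtimes W)\tensor(B\rtimes W)$ and the trivial-representation classes, and the commuting square extracted from naturality of that lemma together with the Morita equivalences and the identification $(C_0(\ft)\rtimes\Gamma)\rtimes W\cong C_0(\ft)\rtimes(\Gamma\rtimes W)$. Parts (ii) and (iii) of your plan are exactly the paper's Theorem \ref{descent of PD} and its application, so no objection there.

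The genuine gap is your one-line claim that ``the opposite product is handled symmetrically.'' The cycles $\cP$ and $\cQ$ are \emph{not} symmetric in $C(T)$ and $C(T^\vee)$: in $\cQ$ the algebra $C(T)$ acts by pointwise multiplication on $L^2(\ft)$ while $C(T^\vee)$ acts by $\Gamma$-translations, and in $\cP$ the module $\cE$ is a completion of $C_c(\ft)$, not of $C_c(\ft^*)$. So computing $\cP\otimes_{C(T)}\cQ$ is not the mirror image of the first computation; the internal tensor product $\cE\tensor_{C(T)}(L^2(\ft)\tensor\cS)$ is a different module and would require a new identification, a new connection, and a new homotopy. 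What symmetry does give you for free is the \emph{dual} identity $\cP^\vee\otimes_{C(T)}\cQ^\vee=1_{C(T^\vee)}$ for the cycles built by exchanging the roles of $T$ and $T^\vee$; to convert that into the needed identity for the original pair, the paper proves (Theorem \ref{theta^* fa^vee}) that conjugation by the Fourier transform $\cF$ together with a Clifford-algebra unitary $\cU$ intertwines $\cQ$ with $\cQ^\vee$ up to the flip-plus-inversion isomorphism $\theta:C(T^\vee)\tensor C(T)\to C(T)\tensor C(T^\vee)$, i.e.\ $\cQ=\theta^*\cQ^\vee$, and then uses an associativity/commutativity argument with external products (Theorem \ref{second Kasparov product}) to deduce $\cP=(\theta^{-1})_*\cP^\vee$, whence $\cP\otimes_{C(T)}\cQ=1_{C(T^\vee)}$. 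Note in particular that $\cP=(\theta^{-1})_*\cP^\vee$ is not checked at the level of cycles but is itself a consequence of the first zig-zag identity; some argument of this kind (or a full second direct computation) is needed, and your proposal as written supplies neither.
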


The vertical maps factor through $KK(C(T)\rtimes W,\C)$ on the left and $KK(\C,C(T^\vee)\rtimes W)$ on the right, and these may be identified (by Fourier-Pontryagin duality) with the groups $KK^*(C^*(\Gamma^\vee\rtimes W),\C)$ and $KK^*(\C,C^*(\Gamma\rtimes W))$ respectively.

\begin{theorem}\label{PD2}
Let $T$ be a torus with flat Riemannian metric and $T^\vee$ its dual. Suppose that $W$ is a finite group acting isometrically on $T$ with a global fixed point. The Poincar\'e duality from $C(T)$ to $C(T^\vee)$ descends to give a non-equivariant Poincar\'e duality as follows.
\[
\begin{CD}
KK^*_W(C(T),\C) @>\cong>> KK^*_W(\C,C(T^\vee)).\\
@VV\cong V @V\cong VV \\
KK^*(C^*(\Gamma^\vee\rtimes W),\C) @>\cong>> KK^*(\C,C^*(\Gamma\rtimes W))\\
\end{CD}
\]
where
\begin{itemize}
\item the top and bottom maps are induced by the Poincar\'e dualities,
\item the left-hand map is the composition of the $W$-equivariant Fourier-Pontryagin duality $C(T)\cong C^*(\Gamma^\vee)$ with the dual Green-Julg isomorphism in $K$-homology,
\item the right-hand map is its dual, i.e. the composition of the $W$-equivariant Fourier-Pontryagin duality $C(T^\vee)\cong C^*(\Gamma)$ with the Green-Julg isomorphism in $K$-theory.
\end{itemize}
\end{theorem}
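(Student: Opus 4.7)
The plan is to derive Theorem \ref{PD2} as a direct consequence of Theorem \ref{PD} by reformulating the bottom row of its diagram via Fourier--Pontryagin duality in place of the Morita equivalence $C(T)\sim C_0(\ft)\rtimes\Gamma$. I would proceed by (i) establishing the required algebra isomorphisms, (ii) constructing the vertical arrows, and (iii) reading off commutativity from Theorem \ref{PD}.

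For (i), Pontryagin duality gives a canonical $C^*$-algebra isomorphism $C(T)\cong C^*(\Gamma^\vee)$ via the Fourier transform, sending the character of $T$ associated with $\gamma\in\Gamma^\vee$ to the corresponding unitary in $C^*(\Gamma^\vee)$. Since by definition the $W$-action on $\Gamma^\vee$ is dual to the $W$-action on $\Gamma$, and hence to the action on $T$, this isomorphism is $W$-equivariant. Taking crossed products with $W$ and using the standard identification $C^*(A)\rtimes W\cong C^*(A\rtimes W)$ for an action of $W$ on a discrete group $A$, I obtain $C(T)\rtimes W\cong C^*(\Gamma^\vee\rtimes W)$; symmetrically, $C(T^\vee)\rtimes W\cong C^*(\Gamma\rtimes W)$.

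For (ii), since $W$ is finite the Green--Julg theorem provides $KK^*_W(\C,C(T^\vee))\cong KK^*(\C,C(T^\vee)\rtimes W)$ and its dual form provides $KK^*_W(C(T),\C)\cong KK^*(C(T)\rtimes W,\C)$. Composing each with the algebra isomorphism from step (i) furnishes the vertical arrows of the diagram.

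For (iii), Theorem \ref{PD} supplies the equivariant Poincar\'e duality of the top row, and, as noted in the remark following its statement, the descent to the bottom row of Theorem \ref{PD}'s diagram factors through the intermediate groups $KK^*(C(T)\rtimes W,\C)$ and $KK^*(\C,C(T^\vee)\rtimes W)$. Transporting the same factorisation along the Fourier--Pontryagin isomorphisms of step (i) yields the bottom row of Theorem \ref{PD2} and exhibits the commuting square. The only substantive verification is the $W$-equivariance of $C(T)\cong C^*(\Gamma^\vee)$, which is immediate from the naturality of Pontryagin duality and the way the dual $W$-actions are set up; no new $KK$-theoretic construction is needed beyond Theorem \ref{PD}.
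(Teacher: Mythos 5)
Your argument is sound, but it is organised differently from the paper's. The paper proves Theorem \ref{PD2} in exact parallel with its proof of Theorem \ref{PD}: it composes $\cP$ and $\cQ$ with the $W$-equivariant Fourier--Pontryagin isomorphisms $C(T)\cong C^*(\Gamma^\vee)$ and $C(T^\vee)\cong C^*(\Gamma)$ to obtain a $W$-equivariant Poincar\'e duality between $C^*(\Gamma^\vee)$ and $C^*(\Gamma)$, and then applies the descent result (Theorem \ref{descent of PD}) to that duality, identifying $C^*(\Gamma^\vee)\rtimes W\cong C^*(\Gamma^\vee\rtimes W)$ and $C^*(\Gamma)\rtimes W\cong C^*(\Gamma\rtimes W)$; the commuting square is then precisely the one furnished by Theorem \ref{descent of PD}, with no reference to Theorem \ref{PD}. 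You instead deduce Theorem \ref{PD2} from Theorem \ref{PD}, factoring its vertical maps through $KK^*(C(T)\rtimes W,\C)$ and $KK^*(\C,C(T^\vee)\rtimes W)$ and conjugating its bottom row by the descended Morita equivalences and the crossed-product Fourier--Pontryagin isomorphisms. This is a legitimate alternative, but to make it airtight you should record two facts used silently: first, that a Poincar\'e duality pair transports along $KK$-equivalences in either variable (so the conjugated bottom map is again induced by a duality, with unit and counit obtained by composing with the equivalences and their inverses); second, that the Green--Julg and dual Green--Julg isomorphisms are natural, so that ``Fourier--Pontryagin then Green--Julg'' agrees with ``Green--Julg then descended Fourier--Pontryagin'', which is needed to match your vertical arrows with those in the statement, and so that commutativity of the square in Theorem \ref{PD2} follows from the outer square of Theorem \ref{PD} together with the lower sub-square, which commutes by construction and has invertible verticals. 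The paper's route is shorter because Theorem \ref{descent of PD} already delivers both the descended duality and the commuting square with the Green--Julg verticals, avoiding the detour through the $C_0(\ft)\rtimes\Gamma$ picture; your route proves the same statement at the cost of these extra naturality checks.
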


In Section \ref{baumconnesconnection} we turn to the question of the relationship between the Baum-Connes assembly map and our Poincar\'e duality. In particular, we show that the following diagram of isomorphisms commutes. 
\[
\begin{CD}
KK_{\Gamma\rtimes W}^*(C_0(\ft),\C) @>\textrm{Baum-Connes}>> KK^*(\C,C^*(\Gamma\rtimes W))\\
@VV\textrm{dual Green-Julg} V @VV\textrm{Morita equivalence}V \\
KK^*(C_0(\ft)\rtimes (\Gamma\rtimes W),\C) @>\textrm{Poincar\'e duality}>> KK^*(\C,C_0(\ft^*)\rtimes(\Gamma^\vee\rtimes W))\\
\end{CD}
\]
Given the definitions of the maps this is, in some sense surprising since both the Baum-Connes and the dual Green-Julg maps factor through the descent map, which has target  $KK(\C,C_0(\ft^*)\rtimes(\Gamma^\vee\rtimes W) \otimes C^*(\Gamma\rtimes W))$. The corresponding square with this latter group in the top left corner as illustrated in Section \ref{baumconnesconnection}, does not commute. 

\bigskip
A case of particular interest is provided by the action of a Weyl group $W$ on a torus, provided by a root datum $(X^*, R, X_*, R^\vee)$. Let $W_a'=X_* \rtimes W$ be the corresponding extended affine Weyl group. The Langlands dual root system $(X_*, R^\vee, X^*, R)$ gives rise to a dual extended affine Weyl group $(W_a')^\vee=X^* \rtimes W$, which is not usually isomorphic to $W_a'$. However the Poincar\'e duality in Theorem \ref{PD2} provides an isomorphism between $K^*(C^*((W_a')^\vee))$ and $K_*(C^*(W_a'))$.

The Langlands duality between $W_a'$ and $(W_a')^\vee$ is further amplified by the following theorem.
\newcommand{\dualthm}{Let $G$ be a compact connected semisimple Lie group and $G^\vee$ its Langlands dual, with $W_a'$, $(W_a')^\vee$ the corresponding extended affine Weyl groups. Then  there is a rational isomorphism
$$K_*(C^*((W_a')^\vee))\cong K_*(C^*(W_a')).$$}
\begin{corollary}\label{duality}
\dualthm
\end{corollary}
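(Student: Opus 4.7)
The strategy is to apply Theorem \ref{PD2} to the Weyl-group setting and then bridge between $K$-homology and $K$-theory rationally via the Universal Coefficient Theorem. Taking $T$ to be the maximal torus of $G$ equipped with an invariant Riemannian metric and with identity as fixed point, the fundamental group $\Gamma$ of $T$ is the cocharacter lattice $X_*$ and the fundamental group $\Gamma^\vee$ of $T^\vee$ is the character lattice $X^*$. Hence $\Gamma \rtimes W = W_a'$ and $\Gamma^\vee \rtimes W = (W_a')^\vee$, and Theorem \ref{PD2} produces a non-equivariant Poincar\'e duality
\[
K^*(C^*((W_a')^\vee)) \cong K_*(C^*(W_a')).
\]

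To upgrade this to a statement purely about $K$-theory, I would convert $K^*$ into $K_*$ of the same algebra rationally. Since $(W_a')^\vee = X^* \rtimes W$ is virtually abelian it is amenable, so $C^*((W_a')^\vee) = C^*_r((W_a')^\vee)$, and Fourier--Pontryagin duality identifies this algebra with the type I algebra $C(T)\rtimes W$. In particular it satisfies the UCT, yielding a short exact sequence
\[
0 \to \Ext^1(K_{*-1}(A),\Z) \to K^*(A) \to \Hom(K_*(A),\Z) \to 0
\]
with $A = C^*((W_a')^\vee)$, which becomes an isomorphism after tensoring with $\Q$ (the Ext term is torsion).

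The final step is to verify that $K_*(C^*((W_a')^\vee))$ is finitely generated as an abelian group, so that the finite-dimensional $\Q$-vector space $K_*(A)\otimes \Q$ is canonically isomorphic to its dual. By Green--Julg, $K_*(C(T)\rtimes W) \cong K^*_W(T)$, the $W$-equivariant topological $K$-theory of the compact space $T$; this is a finitely generated module over the representation ring $R(W)$, and $R(W)$ is itself finitely generated as a $\Z$-module, so $K^*_W(T)$ is finitely generated over $\Z$. Combining the three ingredients gives
\[
K_*(C^*(W_a'))\otimes \Q \cong K^*(C^*((W_a')^\vee))\otimes \Q \cong K_*(C^*((W_a')^\vee))\otimes \Q,
\]
which is the required rational isomorphism. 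The substantive content is already in Theorem \ref{PD2}; the points to check here are the identifications $W_a'=\Gamma\rtimes W$ and $(W_a')^\vee=\Gamma^\vee\rtimes W$, the applicability of the UCT, and the finite generation of the $K$-theory group, none of which presents a serious obstacle.
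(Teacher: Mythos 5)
Your proposal is correct and takes essentially the same route as the paper: invoke Theorem \ref{PD2} with $\Gamma=X_*$, $\Gamma^\vee=X^*$ to get $K^*(C^*((W_a')^\vee))\cong K_*(C^*(W_a'))$, then pass between $K$-homology and $K$-theory rationally via the UCT (the paper states the UCT for $K^W_*(T)$ versus $K^*_W(T)$ before applying Green--Julg, while you apply it directly to $C^*((W_a')^\vee)\cong C(T)\rtimes W$, which amounts to the same thing, and your finite-generation check is a welcome extra precision). One small caveat: a finite-dimensional $\Q$-vector space is isomorphic to its dual but not \emph{canonically} so; this does not affect the corollary, which asserts only the existence of a rational isomorphism.
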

\medskip

\newcommand{\affinecor}{Let $W_a'$ be the extended affine Weyl group of $G$, and let $W_a, W_a^\vee$ be the affine Weyl groups of $G$ and its Langlands dual $G^\vee$. If $G$ is of adjoint type then rationally
$$K_*(C^*(W_a^\vee))\cong K_*(C^*(W_a')).$$
If additionally $G$ is of type $A_n, D_n, E_6, E_7, E_8, F_4, G_2$ then rationally
$$K_*(C^*(W_a))\cong K_*(C^*(W_a')).$$}

In particular we obtain a duality between  affine and extended affine Weyl groups of the following form:

\begin{corollary}\label{affine theorem}
\affinecor
\end{corollary}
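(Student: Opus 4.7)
The plan is to reduce Corollary \ref{affine theorem} directly to Corollary \ref{duality} via a short identification of the relevant groups in terms of root data. Recall that the extended affine Weyl group is $W_a' = X_* \rtimes W$, built from the cocharacter lattice, while the affine Weyl group $W_a = Q^\vee \rtimes W$ depends only on the underlying root system and uses the coroot lattice $Q^\vee \subset X_*$. The corresponding groups attached to the Langlands dual root datum $(X_*, R^\vee, X^*, R)$ are $(W_a')^\vee = X^* \rtimes W$ and $W_a^\vee = Q \rtimes W$, where $Q \subset X^*$ is the root lattice.

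For the first assertion, assume $G$ is of adjoint type. Then the character lattice of the maximal torus coincides with the root lattice, $X^* = Q$, so
\[
(W_a')^\vee = X^* \rtimes W = Q \rtimes W = W_a^\vee,
\]
and applying Corollary \ref{duality} gives the rational isomorphism $K_*(C^*(W_a^\vee)) \cong K_*(C^*((W_a')^\vee)) \cong K_*(C^*(W_a'))$.

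For the second assertion, the listed types $A_n, D_n, E_6, E_7, E_8, F_4, G_2$ are exactly the irreducible root systems which are isomorphic to their own Langlands dual as $W$-root systems: the ADE types are simply-laced, so $R = R^\vee$ after canonical identification, while $F_4$ and $G_2$ are each self-dual up to a uniform rescaling interchanging the long and short roots. Such an isomorphism produces a $W$-equivariant lattice isomorphism $Q^\vee \cong Q$ and hence a group isomorphism $W_a = Q^\vee \rtimes W \cong Q \rtimes W = W_a^\vee$. Combining with the first part gives $K_*(C^*(W_a)) \cong K_*(C^*(W_a^\vee)) \cong K_*(C^*(W_a'))$ rationally.

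The nontrivial input is Corollary \ref{duality}; there is no genuine obstacle in the present step, only routine bookkeeping. The one point worth flagging is the need to recognise that the lattice comparison $Q^\vee \cong Q$ for $F_4$ and $G_2$ must be carried out $W$-equivariantly, which is precisely why such a rescaling isomorphism of root systems (rather than merely an isomorphism of their Weyl groups) is required.
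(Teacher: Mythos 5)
Your proof is correct and follows essentially the same route as the paper: identify $(W_a')^\vee$ with $W_a^\vee$ when $G$ is adjoint and then invoke Corollary \ref{duality}, and for the listed types identify $W_a\cong W_a^\vee$. The only difference is cosmetic — you phrase the two identifications at the level of root data ($X^*=Q$ for adjoint type; self-duality of the root system for $A_n,D_n,E_6,E_7,E_8,F_4,G_2$), whereas the paper states the same facts at the group level ($G^\vee$ simply connected, resp.\ $G^\vee=\widetilde G$, using Lemma \ref{cover}).
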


Recall that the extended affine Weyl group $W_a'$ is an extension of $W_a$ by the cyclic group $\pi_1(G)$ so the content of Corollary \ref{affine theorem} is that, surprisingly, this particular extension does not change the $K$-theory.

In a companion paper \cite{NPW} we explore this phenomenon in further detail and give detailed computations of these $K$-theory groups in a number of cases.

\section{Background}   \label{langlands section}

\subsection{Real Langlands duality}
Recall that a connected complex  reductive Lie group $\bH$ is classified by its root datum. That is a 4-tuple $(X^*, R, X_*, R^\vee)$ where $X^*$ is the lattice of characters on a maximal torus in $\bH$, and $X_*$ is the lattice of co-characters, or equivalently the fundamental group of the maximal torus. The set of roots $R\subset X^*$ is in bijection with the reflections in the Weyl group $W$ and in bijection with the set of coroots $R^\vee\subset X_*$. Root data classify connected complex  reductive Lie groups, in the sense that two such groups are isomorphic if and only if their root data are isomorphic (in the obvious sense).
The Langlands dual of $\bH$, denoted $\bH^\vee$ is then the unique connected complex  reductive Lie group associated to the dual root datum $(X_*, R^\vee, X^*, R)$. See \cite{B} for details.

One of the key motivations of this paper is that for extended affine Weyl groups the Baum-Connes correspondence should be thought of as an equivariant duality between maximal tori in a compact connected semisimple Lie group and its real Langlands dual. As in the complex case these are classified by their root data, and we can define the (real) Langlands dual by dualising the root datum as before. Since the real case is not as well known we recall the relationship with the complex case.

For a Lie group $G$, the \emph{complexification} $G_\C$ is a complex Lie group together with a morphism from $G$, satisfying the universal property that for any morphism of $G$ into a complex Lie group $\bL$ there is a unique factorisation through $G_\C$.

For $T$ a maximal torus in $G$, the complexification $\bS: = T_\C$ of $T$ is a maximal torus in $\bH: = G_\C$, and so the dual torus $\bS^\vee$ is well-defined in 
the dual group $\bH^\vee$.   Then $T^\vee$ is defined to be the maximal compact subgroup of $\bS^\vee$, and satisfies the condition
\[
(T^\vee)_\C =  \bS^\vee.
\]

The groups $X^*, X_*$ in the root datum are again the groups of characters and co-characters of the torus $T$ respectively. Dually $X_*, X^*$ are the groups of characters and co-characters on the dual torus $T^\vee$, giving the $T$-duality equation
\begin{align}\label{dualtorus2}
X^*(T^\vee) = X_*(T).
\end{align}
The torus $T^\vee$ is given explicitly by 
$T^\vee=\Hom(X_*(T), \U)$. 
 The \emph{Langlands dual} of $G$, denoted $G^\vee$, is defined to be a maximal compact subgroup of $\bH^\vee$ containing the torus $T^\vee$.

The process of passing to a maximal compact subgroup is inverse to complexification in the sense that complexifying $G^\vee$ recovers $\bH^\vee$.

\medskip
\subsubsection{A table of Langlands dual groups}  Given a compact connected semisimple Lie group $G$, the product $|\pi_1(G)| \cdot |\cZ(G)|$ is unchanged by Langlands duality, i.e.\ it agrees with the product $|\pi_1(G^\vee)| \cdot |\cZ(G^\vee)|$. This product is equal to the \emph{connection index}, denoted $f$, (see \cite[IX, p.320]{B}), which is defined in \cite[VI, p.240]{B}. The connection indices are listed in \cite[VI, Plates I--X,
p.265--292]{B}.

The following is a table of Langlands duals and connection indices for compact connected semisimple groups: 

\begin{center}
\begin{tabular}{l||l|l}
$G$ & $G^{\vee}$ & $f$\\
\hline
$A_n = \SU_{n + 1}$ & $\PSU_{n + 1}$  & $n+1$\\
$B_n = \SO_{2n+1}$ & $\Sp_{2n}$ & $2$\\
$C_n = \Sp_{2n}$ & $\SO_{2n+1}$ & $2$\\
$D_n = \SO_{2n}$ & $\SO_{2n}$ &$4$\\
$E_6$ & $E_6$ & $3$\\
$E_7$ & $E_7$ & $2$\\
$E_8$ & $E_8$ &$1$\\
$F_4$ & $F_4$ & $1$\\
$G_2$ & $G_2$ & $1$\\
\end{tabular}
\end{center}

In this table, 
the simply-connected form of $E_6$ (resp. $E_7$) dualises to the adjoint form of $E_6$ (resp. $E_7$).

The Lie group $G$ and its dual $G^\vee$ admit a common Weyl group 
\[
W = N(T)/T = N(T^\vee)/T^\vee.
\]
The $T$-duality equation (\ref{dualtorus2}) identifies the action of the Weyl group of $T$ on $X_*(T)$ with the dual action of the Weyl group of $T^\vee$ on $X^*(T^\vee)$.

\begin{remark}
In general,  $T$ and $T^\vee$ are \emph{not} isomorphic as $W$-spaces.  For example, let $G = \SU_3$ and take 
$T = \{(z_1, z_2, z_3) : z_j \in \U, z_1 z_2 z_3 = 1\}$. Then in homogeneous coordinates we have $T^\vee= \{(z_1: z_2: z_3) : z_j \in \U, z_1 z_2 z_3 = 1 \}$.  The Weyl group $W$ is the symmetric group $S_3$ which acts by permuting coordinates in both cases. Note that the torus $T$ admits three $W$-fixed points
whereas the unique $W$-fixed point in $T^\vee$ is the identity $(1:1:1) \in T^\vee$, hence $T$ and $T^\vee$ are not $W$-equivariantly isomorphic.  
\end{remark}

 The \emph{nodal group} of the torus $T$ is defined to be $\Gamma(T): = \ker (\exp: \ft \to T)$ and differentiating the action of the Weyl group $W$ we obtain a linear action of $W$ on the Lie algebra $\ft$ which restricts to an action on the nodal group $\Gamma(T)$. Indeed there is a $W$-equivariant isomorphism $X_*(T) \simeq \Gamma(T)$.

We remark that by definition $T^\vee$ is the Pontryagin dual of the nodal group $\Gamma(T)$. Moreover the natural action of $W$ on $T^\vee$ is the dual of the action on $\Gamma(T)$. Hence we have the following:

\begin{lemma}\label{gammadual}  Let $\widehat{\Gamma}$ denote the Pontryagin dual of $\Gamma=\Gamma(T)$.   Then we have a $W$-equivariant isomorphism 
\[
\widehat{\Gamma} \simeq T^\vee.
\]
and hence an isomorphism of $W$\!-\,$C^*$\!-algebras
\[
C^*(\Gamma) \simeq C(T^\vee).
\]
\end{lemma}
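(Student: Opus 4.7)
The plan is to observe that this lemma is essentially a repackaging of definitions combined with classical Pontryagin duality. First I would recall the setup from the preceding discussion: the dual torus $T^\vee$ was defined in Section \ref{langlands section} as $T^\vee=\Hom(X_*(T),\U)$, and via the $W$-equivariant identification $X_*(T)\simeq\Gamma(T)=\Gamma$ this becomes $T^\vee=\Hom(\Gamma,\U)$, which is by definition the Pontryagin dual $\widehat{\Gamma}$. So the first assertion $\widehat{\Gamma}\simeq T^\vee$ is almost tautological once the definitions are aligned.

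Next I would verify $W$-equivariance. The action of $W$ on $\widehat{\Gamma}$ is the standard dual action $(w\cdot\chi)(\gamma)=\chi(w^{-1}\gamma)$ for $\chi\in\widehat{\Gamma}$, $\gamma\in\Gamma$. The paper explicitly states that the natural action of $W$ on $T^\vee$ is the dual of the action on $\Gamma$, so under the identification $\widehat{\Gamma}\cong T^\vee$ these two actions match. This makes the topological group isomorphism $W$-equivariant.

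For the $C^*$-algebra statement, I would invoke the Fourier--Pontryagin transform, which for any (countable) discrete abelian group $\Gamma$ yields a canonical isomorphism of $C^*$-algebras
\[
C^*(\Gamma)\xrightarrow{\;\cong\;}C(\widehat{\Gamma})
\]
sending $\gamma\in\Gamma$ to the character-evaluation function $\chi\mapsto\chi(\gamma)$. This intertwiner is natural with respect to group automorphisms of $\Gamma$: an automorphism $w$ of $\Gamma$ acts on $C^*(\Gamma)$ by $\gamma\mapsto w\gamma$ and on $C(\widehat{\Gamma})$ by pullback along the dual automorphism, and these are intertwined by the Fourier transform. Composing with the $W$-equivariant homeomorphism $\widehat{\Gamma}\cong T^\vee$ established above then gives the desired $W$-equivariant isomorphism $C^*(\Gamma)\simeq C(T^\vee)$.

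There is no real obstacle here; the only care needed is bookkeeping to ensure that the $W$-action on $\Gamma$ dualises to the $W$-action on $T^\vee$ as defined via the root datum and not to its inverse, but this is ensured by the conventions already fixed in Section \ref{langlands section} (in particular the equivariance of $X_*(T)\simeq\Gamma(T)$ and the definition of the dual $W$-action on $T^\vee$).
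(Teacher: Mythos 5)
Your proposal is correct and follows essentially the same route as the paper, which justifies the lemma by the remark that $T^\vee$ is by definition the Pontryagin dual of $\Gamma(T)$ with the dual $W$-action, combined with the standard (natural, hence $W$-equivariant) Fourier--Pontryagin isomorphism $C^*(\Gamma)\cong C(\widehat{\Gamma})$. Your write-up merely makes the definitional identifications and the naturality of the Fourier transform explicit, which is consistent with the paper's argument.
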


\subsection{Affine and Extended Affine Weyl groups}\label{affine Weyl section}

In this section we will give the  definitions of the affine and extended affine Weyl groups of a compact connected semisimple Lie group. As noted in the introduction these are semidirect products of lattices in the Lie algebra $\ft$ of a maximal torus $T$ by the Weyl group $W$. The affine Weyl group $W_a$ is a Coxeter group while the extended affine Weyl group contains $W_a$ as a finite index normal subgroup and the quotient  is the fundamental group of the Lie group $G$.

Let $p:\widetilde G\to G$ denote the universal cover and let $\widetilde T$ be the preimage of $T$ which is a maximal torus in $\widetilde G$.
We consider the following commutative diagram.
$$\begin{CD}
@.\Gamma(\widetilde T)@>>>\ft @>>>\widetilde T@>>> 0\\
@. @VV\iota V @VV{\id}V @VV{p|_{\widetilde T}}V\\
0@>>>\Gamma(T)@>>>\ft @>>>T@.
\end{CD}$$
By the snake lemma the sequence
$$
\begin{matrix}
\ker(\id)&\to& \ker(p|_{\widetilde T})&\to& \coker(\iota)&\to& \coker(\id)\\
||&&||&&||&&||\\
0& &\pi_1(G)& & \Gamma(T)/\Gamma(\widetilde T)& &0
\end{matrix}$$
is exact, hence $\Gamma(T)/\Gamma(\widetilde T)$ is isomorphic to $\pi_1(G)$. We thus have a map from $\Gamma(T)$ onto $\pi_1(G)$.  The kernel of this map (more commonly denoted $N(G,T)$)  is the nodal lattice $\Gamma(\widetilde T)$ for the torus $\widetilde T$ and we have:

\begin{definition}
The \emph{affine Weyl group of $G$} is 
\[
W_a(G) = \Gamma(\widetilde T) \rtimes W
\]
 and the \emph{extended
affine Weyl group of $G$} is 
\[
W'_a(G) = \Gamma(T) \rtimes W\]
where $W$ denotes the Weyl group of $G$.
\end{definition}

The following is now immediate.

\begin{lemma}\label{cover} Let $\widetilde{G}$ denote the universal cover of $G$ and let $\widetilde{T}$ denote
a maximal torus in $\widetilde{G}$.   Then we have
\[
W_a(G) = W'_a(\widetilde{G}) = W_a(\widetilde{G}).
\]
\end{lemma}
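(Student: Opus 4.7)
The proof is essentially a direct unraveling of the definitions, so my plan is to be concise but explicit about which identifications make each equality tautological.

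First I would note that the Weyl group is unchanged under passage to the universal cover: $N(T)/T$ and $N(\widetilde T)/\widetilde T$ are naturally isomorphic because the covering $p\colon\widetilde G\to G$ restricts to a surjection $\widetilde T\to T$ with kernel $\pi_1(G)\subset \cZ(\widetilde G)$, which lies in every maximal torus, so normalisers and tori correspond. I will denote this common Weyl group by $W$.

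Next I would apply the definition of $W_a$ and $W_a'$ to the pair $(\widetilde G,\widetilde T)$. Since $\widetilde G$ is simply connected, its own universal cover is itself, and the preimage of $\widetilde T$ under the identity map is $\widetilde T$. So by definition
\[
W_a(\widetilde G)=\Gamma(\widetilde T)\rtimes W=W_a'(\widetilde G),
\]
which gives the second equality in the statement. Equivalently, the snake-lemma exact sequence appearing just before the definition degenerates for $\widetilde G$ because $\pi_1(\widetilde G)=0$ forces $\Gamma(\widetilde T)/\Gamma(\widetilde T)=0$, so the nodal lattice and the full character lattice of $\widetilde T$ coincide.

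For the first equality $W_a(G)=W_a'(\widetilde G)$, I would simply observe that both groups are $\Gamma(\widetilde T)\rtimes W$ by definition: $W_a(G)$ is defined using the nodal lattice of the preimage $\widetilde T$ of $T$ in $\widetilde G$, while $W_a'(\widetilde G)$ is defined using the nodal lattice of the maximal torus $\widetilde T$ in $\widetilde G$, and these are the same lattice and the same $W$-action. There is no real obstacle here; the content of the lemma is the observation that the extension by $\pi_1(G)$ implicit in $W_a'(G)$ disappears once one lifts to $\widetilde G$, so the only mild care required is to check that the $W$-action on $\Gamma(\widetilde T)$ induced from $\widetilde G$ agrees with the restriction of the $W$-action on $\Gamma(T)$ under the injection $\iota\colon\Gamma(\widetilde T)\hookrightarrow\Gamma(T)$, which is immediate because $\iota$ is the differential of $p|_{\widetilde T}$ and $W$ acts by conjugation by lifts of elements of $N(T)$.
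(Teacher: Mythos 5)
Your proof is correct and follows essentially the same route as the paper, which simply records the lemma as immediate from the definitions of $W_a$ and $W_a'$: both equalities reduce to the observation that the universal cover of $\widetilde G$ is $\widetilde G$ itself and that $W_a(G)$ and $W_a'(\widetilde G)$ are both $\Gamma(\widetilde T)\rtimes W$ by construction. Your extra checks (the identification of Weyl groups under the covering and the compatibility of the $W$-actions via $\iota = d(p|_{\widetilde T})$) are accurate but just make explicit what the paper leaves tacit.
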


We remark that the extended affine Weyl group $W_a'(G)$ is a split extension of $W_a(G)$ by $\pi_1(G)$.

\section{Equivariant Poincar\'e Duality between $C(T)$ and $C(T^\vee)$}\label{poincare section}

We begin by recalling the general framework of Poincar\'e duality in $KK$-theory. For $\fG$-$C^*$-algebras $A,B$ a Poincar\'e duality is given by elements $\fa\in KK_\fG(B\tensor A,\C)$
and $\fb\in KK_\fG(\C,A\tensor B)$ with the property that
\begin{align}\label{PDisom}
\begin{split}
\fb\otimes_A\fa &= 1_B \in KK_\fG(B,B)\\
\fb\otimes_B\fa &= 1_A \in KK_\fG(A,A).
\end{split}
\end{align}

These then yield isomorphisms between the $K$-homology of $A$ and the $K$-theory of $B$ (and vice versa) given by
\begin{align*}
\fx \mapsto \fb\otimes_A\fx \in KK_\fG(\C,B) \text{ for } \fx \in KK(A,\C)\\
\fy \mapsto \fy\otimes_B\fa \in KK_\fG(A,\C) \text{ for } \fy \in KK(\C,B).
\end{align*}

Throughout this section $T$ will denote a torus with flat Riemannian metric, $T^\vee$ its dual torus and $W$ a finite group acting by isometries on $T$ (and dually on $T^\vee$). We will  construct elements  $\cQ\in KK_W(C(T^\vee)\tensor C(T),\C)$ and $\cP\in KK_W(\C,C(T)\tensor C(T^\vee))$ satisfying \ref{PDisom}.  

\subsection{The Poincar\'e line bundle }

Recall that the Poincar\'e line bundle over $T\times T^\vee$ is the bundle with total space given by the quotient of $\ft\times T^\vee\times \C$ under the action of $\Gamma$ defined by $\gamma(x,\chi,z)=(\gamma +x,\chi,\chi(\gamma)z)$. The projection onto the base $T\times T^\vee$ maps the $\Gamma$ orbit of $(x,\chi,z)$ to the $\Gamma$ orbit of $(x,\chi)$. Here we are identifying elements of $T^\vee$ with characters on $\Gamma$. We note that the bundle is $W$-equivariant with respect to the diagonal action of $W$ on $\ft\times T^\vee$, hence it defines an element in $W$-equivariant K-theory allowing it to play the role of the element $\cP$ in our  Poincar\'e duality. 

To place this in the language of $KK$-theory we consider sections of this bundle, which are given by functions $\sigma:\ft\times T^\vee\rightarrow \C$ such that $\sigma(\gamma+x, \chi)=\chi(\gamma)\sigma(x,\chi)$. They naturally form a module over $C(T\times T^\vee)$ and given two such sections we define $\langle \sigma_1, \sigma_2\rangle=\overline{\sigma_1}\sigma_2$. We note that this is a $\Gamma$ periodic function in the first variable, hence the inner product takes values in $C(T\times T^\vee)$, giving the space of sections the structure of a Hilbert module.

We will now give an alternative construction of this Hilbert module. Let $C_c(\ft)$ denote the space of continuous compactly supported functions on $\ft$ and equip this with a $C(T)\otimes C(T^\vee)$-valued inner product defined by
$$\la \phi_1,\phi_2\ra(x,\eta)=\sum_{\alpha,\beta \in\Gamma} \overline{\phi_1(x-\alpha)}\phi_2(x-\beta)e^{2\pi i \la \eta,\beta-\alpha\ra}.$$
We remark that the support condition ensures that this is a finite sum, and that it is easy to check that $\la \phi_1,\phi_2\ra(x,\eta)$ is $\Gamma$-periodic in $x$ and $\Gamma^\vee$-periodic in $\eta$.

The space $C_c(\ft)$ has a $C(T)\otimes \C[\Gamma]$-module structure
$$(\phi\cdot (f\otimes [\gamma])) = \phi(x+\gamma) \tilde f(x)$$
where we view the function $f$ in $C(T)$ as a $\Gamma$-periodic function $\tilde{f}$ on $\ft$.
 
Completing $C_c(\ft)$ with respect to the norm arising from the inner product, the module structure extends by continuity to give $\ol{C_c(\ft)}$ the structure of a $C(T)\tensor C^*(\Gamma)\cong C(T)\tensor C(T^\vee)$ Hilbert module. We denote this Hilbert module by $\cE$ and give this the trivial grading.

The group $W$ acts on $\ft$ and hence on $C_c(\ft)$ by $(w\cdot \phi)(x)=\phi(w^{-1}x)$. We have
$$(w\cdot(\phi\cdot (f\otimes [\gamma])))(x) = \phi(w^{-1}x+\gamma) \tilde f(w^{-1}x)=((w\cdot \phi)\cdot(w\cdot f\otimes[w\gamma]))(x)$$
so the action is compatible with the module structure.  Now for the inner product we have

\begin{align*}
\la w\cdot\phi_1,w\cdot\phi_2\ra(x,\eta)&=\sum_{\alpha,\beta \in\Gamma} \overline{(w\cdot\phi_1)(x-\alpha)}(w\cdot\phi_2)(x-\beta)e^{2\pi i \la \eta,\beta-\alpha\ra}\\
&=\sum_{\alpha,\beta \in\Gamma} \overline{\phi_1(w^{-1}x-w^{-1}\alpha)}\phi_2(w^{-1}x-w^{-1}\beta)e^{2\pi i \la \eta,\beta-\alpha\ra}\\
&=\sum_{\alpha',\beta' \in\Gamma} \overline{\phi_1(w^{-1}x-\alpha')}\phi_2(w^{-1}x-\beta')e^{2\pi i \la \eta,w(\beta'-\alpha')\ra}\\
&=\sum_{\alpha',\beta' \in\Gamma} \overline{\phi_1(w^{-1}x-\alpha')}\phi_2(w^{-1}x-\beta')e^{2\pi i \la w^{-1}\eta,\beta'-\alpha'\ra}\\
&=(w\cdot\la \phi_1,\phi_2\ra)(x,\eta).
\end{align*}
Hence $\cE$ is a $W$-equivariant Hilbert module.

The identification of the module $\cE$ with the sections of the Poincar\'e line bundle is given by the following analog of the Fourier transform. For each element $\phi\in C_c(\ft)$ set 

\[
\sigma(x,\chi)=\sum\limits_{\gamma\in \Gamma} \phi(x-\gamma)\chi(\gamma).
\]

It is routine to verify that $\sigma(x+\delta,\chi)=\chi(\delta)\sigma(x,\chi)$ hence $\sigma$ is a section of the Poincar\'e line bundle, and that the $W$ action on $C_c(\ft)$ corresponds precisely to the $W$ action on the bundle.

\begin{theorem}
The triple $(\cE,1,0)$, where $1$ denotes the identity representation of $\C$ on $\cE$, is a $W$-equivariant Kasparov triple defining an element $\cP$ in $KK_W(\C,C(T)\tensor C(T^\vee))$.
\end{theorem}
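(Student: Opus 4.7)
The plan is to recognize the Hilbert module $\cE$ as the module of continuous sections of the Poincar\'e line bundle $L$ over $T\times T^\vee$, and to then deduce the Kasparov cycle axioms from the fact that $L$ is a locally trivial line bundle on a compact manifold.

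First I would promote the ``Fourier-type'' map $\phi\mapsto \sigma$, with
\[
\sigma(x,\chi)=\sum_{\gamma\in\Gamma}\phi(x-\gamma)\chi(\gamma),
\]
introduced at the end of the excerpt, to an isomorphism of $W$-equivariant $C(T)\tensor C(T^\vee)$-Hilbert modules $\cE\to \Gamma(L)$. A direct computation, substituting $\chi(\gamma)=e^{2\pi i\la\eta,\gamma\ra}$ and reindexing the double sum, gives
\[
\overline{\sigma_1(x,\chi)}\,\sigma_2(x,\chi)=\sum_{\alpha,\beta\in\Gamma}\overline{\phi_1(x-\alpha)}\,\phi_2(x-\beta)\,e^{2\pi i\la\eta,\beta-\alpha\ra}=\la\phi_1,\phi_2\ra(x,\eta),
\]
so the map is isometric and hence extends to the completion. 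Compatibility with the $C(T)\tensor C(T^\vee)$-action and the diagonal $W$-action on $\ft\times T^\vee$ is immediate from the definitions (the $W$-equivariance of the inner product having already been established in the excerpt). Surjectivity would follow by a standard partition-of-unity argument on a fundamental domain for $\Gamma$ in $\ft$: any continuous section of $L$ lifts to a $\Gamma$-quasi-periodic function on $\ft\times T^\vee$ whose restriction to a neighbourhood of a fundamental domain gives an element of $C_c(\ft)\otimes C(T^\vee)$ mapping onto it.

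Next, since $T\times T^\vee$ is a compact smooth manifold and $L$ is a complex line bundle over it (equivariantly with respect to the $W$-action), the Serre--Swan theorem identifies $\Gamma(L)$ with a finitely generated projective $C(T)\tensor C(T^\vee)$-module. Consequently the identity operator on $\cE\simeq\Gamma(L)$ lies in the compact operators $\cK(\cE)$.

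With this in hand, verifying the axioms for $(\cE,1,0)$ to be a $W$-equivariant Kasparov cycle in $KK_W(\C,C(T)\tensor C(T^\vee))$ is formal. Taking $F=0$ and $\rho$ the canonical unital representation of $\C$, the compactness condition $\rho(a)(F^2-1)=-a\cdot\id_\cE$ belongs to $\cK(\cE)$ precisely because $\id_\cE$ is compact; the remaining Kasparov axioms on $[F,\rho(a)]$, $F-F^*$, and $w\cdot F-F$ all vanish identically. The main obstacle is thus the identification $\cE\cong\Gamma(L)$, and in particular the surjectivity of the Fourier-type map onto all continuous sections of $L$; once that is in place, the theorem reduces to a standard application of Serre--Swan.
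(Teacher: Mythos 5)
Your proposal is correct and follows essentially the same route as the paper: the paper states this theorem without a separate proof, relying on the preceding construction of $\cE$ and its identification with the sections of the Poincar\'e line bundle via the Fourier-type map $\phi\mapsto\sigma$, exactly as you do. Your write-up simply makes explicit the steps the paper leaves implicit (isometry and surjectivity of that map, Serre--Swan, and hence compactness of $\id_\cE$, which is the only nontrivial Kasparov axiom when the operator is $0$), and these verifications are all accurate.
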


We remark that there is a connection with Fourier-Mukai duality. We recall that Fourier-Mukai duality is given by the map 

\[
\fx \mapsto p_{2*}(p_1^*\fx\otimes \cP),
\]
\noindent where $p_1, p_2$ are the projections of $T\times T^\vee$ onto the first and second factors. From the point of view of $K$-theory the subtlety is to interpret the wrong-way map $p_{2*}$. This should give a map from the  $W$-equivariant $K$-theory of $T\times T^\vee$ to the $W$ equivariant $K$-theory of $T^\vee$, but to make this well defined we must twist by the Clifford algebra $\cl(\ft)$. Specifically we can take 
\[
p_{2*}:= [D]\otimes 1_{C(T^\vee)}\in KK_W(C(T\times T^\vee)\otimes \cl(\ft), C(T^\vee)),
\]
where $[D]$ is the Dirac class in $KK_W(C(T)\otimes \cl(\ft), \C)$. The Fourier-Mukai map is then given by taking the Kasparov product with the element $p_1^*\cP i^* p_{2*}=\cP p_1^* i^* p_{2*}$ where $i$ is the diagonal inclusion of $T\times T^\vee$ into $(T\times T^\vee)^2$. We note that $p_1^* i^* p_{2*}$ is the tensor product of Kasparov's Poincar\'e duality element for $T$ (given by its Dirac element) with the identity on $C(T^\vee)$.

\subsection{Construction of the element $\cQ$ in  $KK_W(C(T^\vee)\tensor C(T),\C)$}\label{K-homology element}

We consider the differential  operator $Q_0$ on $\ft$ with coefficients in the Clifford algebra $\cl(\ft\times \ft^*)$ defined using Einstein summation convention by
$$Q_0=\frac{\partial}{\partial y^j}\otimes \eps^j -2\pi i y^j\otimes \e_j.$$
Here $\{\e_j=\frac{\partial}{\partial y^j}\}$ is an orthonormal basis for $\ft$ ,  $\{\eps^j\}$ denotes the dual basis of $\ft^*$ and we regard these as generators of the Clifford algebra $\cl(\ft\times \ft^*)$.

We view $Q_0$ as an unbounded operator on the Hilbert space $L^2(\ft)\tensor \cS$, where $\cS$ denotes the space of spinors
$\cS=\cl(\ft\times \ft^*)P$ with $P$ the projection $\prod_{j}\frac 12(1-i\e_j\eps^j)$ in the Clifford algebra. (The space $\cS$ is naturally equipped with a representation of $\cl(\ft\times \ft^*)$ by left multiplication inducing the action of $Q_0$.)

The subtlety in constructing an element in equivariant $KK$-theory is the need to ensure that $P$ is $W$-invariant with respect to the diagonal action of $W$ on $\ft\times \ft^*$ 
and hence that the action of $W$ on $\cl(\ft\times \ft^*)$ restricts to a representation on $\cS$. The corner algebra $P\cl(\ft\times \ft^*)P$ is $\C P$, which we identify  with $\C$, and this gives $\cS$ a canonical inner product given by $\la aP,bP\ra=Pa^*bP$.

As a simple example consider the $1$-dimensional case. Here $\cl(\ft\times \ft^*)=M_2(\C)$ and the two generators are $\e_1 = \begin{pmatrix}0&i\\i&0\end{pmatrix}$ and $\eps^1= \begin{pmatrix}0&-1\\1&0\end{pmatrix}$. The projection $P$ is therefore $ \begin{pmatrix}1&0\\0&0\end{pmatrix}$ so $\cS$ is the space of matrices of the form $ \begin{pmatrix}*&0\\{*}&0\end{pmatrix}$ and the operator is
\[
Q_0=\begin{pmatrix}0&-\frac{\partial}{\partial y^1}+2\pi y^1\\\frac{\partial}{\partial y^1}+2\pi y^1&0\end{pmatrix}.
\]
The off-diagonal elements are of course the $1$-dimensional annihilation and creation operators.

Returning to the general case we must now construct a representation of $C(T^\vee)\tensor C(T)$ on $L^2(\ft)\tensor \cS$.  It suffices to define commuting representations of $C(T^\vee)\tensor 1$ and $1\tensor C(T)$. The representation of $C(T)$ is the usual pointwise multiplication on $L^2(\ft)$ viewing elements of $C(T)$ as $\Gamma$-periodic functions on $\ft$. The representation of $C(T^\vee)$ involves the action of $\Gamma$ on $\ft$. 

For $a$ an affine isometry of $\ft$, let $L_a$ be the operator on $L^2(\ft)$ induced by the action of $a$ on $\ft$:
$$(L_a\xi)(y)=\xi(a^{-1}\cdot y).$$
For the function $\eta\mapsto e^{2\pi i\la \eta,\gamma\ra}$ in $C(T^\vee)$ we define
$$\rho(e^{2\pi i\la \eta,\gamma\ra})=L_\gamma\otimes 1_\cS.$$
Identifying $C(T^\vee)$ with $C^*(\Gamma)$ and identifying $L^2(\ft)$ with $\ell^2(\Gamma)\otimes L^2(X)$ where $X$ is a fundamental domain for the action of $\Gamma$, the representation of the algebra is given by the left regular representation on $\ell^2(\Gamma)$.

Consider the commutators of $Q_0$ with the representation $\rho$.  For $f\in C(T)$, the operator $\rho(f)$ commutes exactly with the second term $2\pi i y^j\otimes \e_j$ in $Q_0$, while, for $f$ smooth, the commutator of $\rho(f)$ with $\frac{\partial }{\partial y^j}\otimes \eps^j$ is given by the bounded operator $\frac{\partial f}{\partial y^j}\otimes \eps^j$. Now for the function $\eta\mapsto e^{2\pi i\la \eta,\gamma\ra}$ in $C(T^\vee)$ we have $\rho(e^{2\pi i\la \eta,\gamma\ra})=L_\gamma\otimes 1_\cS$.  This commutes exactly with the differential term of the operator, while
$$L_\gamma(2\pi i y^j)L_\gamma^*=2\pi i (y^j-\gamma^j)$$
hence the commutator $[L_\gamma\otimes 1_\cS,2\pi i y^j\otimes \e_j]$ is again bounded.

We have verified that $Q_0$ commutes with the representation $\rho$ modulo bounded operators, on a dense subalgebra of $C(T^\vee)\tensor C(T)$.  Thus to show that the triple
$$(L^2(\ft)\tensor \cS,\rho,Q_0)$$
is an unbounded Kasparov triple it remains to prove the following.

\begin{theorem}\label{ladder}
The operator $Q_0$ has compact resolvent.  It has $1$-dimensional kernel with even grading.
\end{theorem}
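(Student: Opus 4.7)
My plan is to reduce both assertions to an explicit identification of $Q_0^2$ as a harmonic oscillator Hamiltonian plus a bounded Clifford-algebra correction on the spinor factor. Expanding
\[
Q_0^2 = \Big(\frac{\partial}{\partial y^j}\otimes\eps^j - 2\pi i y^j\otimes \e_j\Big)^2
\]
and using that the generators $\e_k,\eps^j$ of $\cl(\ft\times\ft^*)$ pairwise anticommute with $(\e_j)^2 = (\eps^j)^2 = -1$ (as in the one-dimensional example worked out above), the terms symmetric in the indices collapse to $-\Delta\otimes 1$ and $4\pi^2|y|^2\otimes 1$, while the cross terms gather, via $\e_k\eps^j = -\eps^j\e_k$, into $\sum_{j,k}[\partial_j,-2\pi i y^k]\otimes\eps^j\e_k = -2\pi i\sum_j 1\otimes\eps^j\e_j$. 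Hence
\[
Q_0^2 = H_{\mathrm{osc}}\otimes 1_{\cS} + 1_{L^2(\ft)}\otimes F,
\]
where $H_{\mathrm{osc}} = -\Delta + 4\pi^2|y|^2$ is the $n$-dimensional isotropic harmonic oscillator Hamiltonian and $F = -2\pi i\sum_j \eps^j\e_j$ is a self-adjoint operator on the finite-dimensional spinor space $\cS$.

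Compact resolvent then follows easily: $H_{\mathrm{osc}}$ has purely discrete spectrum $\{2\pi(2|k|+n):k\in\N^n\}$ with finite-dimensional Hermite-function eigenspaces, and since $F$ is bounded and acts only on the finite-dimensional factor $\cS$, the operator $Q_0^2$ inherits compact resolvent. Essential self-adjointness of $Q_0$ on $C_c^\infty(\ft)\otimes\cS$ reduces, via $\mathrm{ran}(Q_0^2+1)\subseteq \mathrm{ran}(Q_0\pm i)$, to the analogous property for $Q_0^2$, which follows from its harmonic-oscillator structure. To identify the kernel I would diagonalise the pairwise commuting involutions $-i\eps^j\e_j$ simultaneously: they generate a maximal abelian subalgebra of $\cl(\ft\times\ft^*)\otimes\C \cong M_{2^n}(\C)$, and so split $\cS\cong\C^{2^n}$ into one-dimensional joint eigenspaces indexed by sign patterns $\sigma\in\{\pm 1\}^n$, on each of which $F$ acts as the scalar $2\pi\sum_j\sigma_j$. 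Decomposing a kernel vector of $Q_0^2$ in this basis and comparing with the spectrum of $H_{\mathrm{osc}}$ (minimum $2\pi n$, attained only on the Gaussian $e^{-\pi|y|^2}$), only the sign pattern $\sigma = (-1,\ldots,-1)$ contributes, so the kernel lies inside $\C\cdot e^{-\pi|y|^2}\otimes V$ where $V$ is the joint $(-1)$-eigenspace.

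A short calculation using $\e_j\eps^j = -\eps^j\e_j$ gives $(-i\eps^j\e_j)P_j = -P_j$, and $-i\eps^j\e_j$ commutes with $P_k$ for $k\neq j$, so $P = \prod_j P_j$ indeed lies in $V$; since $V$ is one-dimensional, the kernel is exactly $\C\cdot (e^{-\pi|y|^2}\otimes P)$. Finally, each $P_j$ is a linear combination of $1$ and the degree-two element $\e_j\eps^j$, so $P$ is even in the Clifford grading on $\cS$, placing the kernel in the even subspace. I expect the main technical nuisance to be the careful sign bookkeeping in the expansion of $Q_0^2$, together with verifying that the conventions implicit in the definition of $P$ put it in the $(-1)$-eigenspace of each $-i\eps^j\e_j$ rather than the $(+1)$-eigenspace --- an error in either would spoil both the kernel count and the grading claim.
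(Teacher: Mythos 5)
Your proposal is correct, but it takes a genuinely different route from the paper. The paper never writes $Q_0^2$ as oscillator plus a bounded Clifford term; instead it introduces the operators $A_j=\frac{1}{2\sqrt\pi}(p_j+x_j)$, splits each as $a_j+a_j^*$ relative to the projections $q_j=\frac12(1+i\e_j\eps^j)$, derives the number-operator spectrum $\{0,1,2,\dots\}$ internally by a conjugation/spectral-shift trick (so no Hermite-function input is quoted), and then finds $\ker Q_0=\bigcap_j\ker A_j$ by integrating the first-order equations $A_jf=0$, with square-integrability forcing the values of $f$ into the range of $P$; the grading claim is read off from $P\cS=P\,\cl(\ft\times\ft^*)P=\C P$. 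Your Weitzenb\"ock-style computation $Q_0^2=(-\Delta+4\pi^2|y|^2)\otimes 1+1\otimes F$ with $F=2\pi\sum_j(-i\eps^j\e_j)$ is consistent with the paper's conventions (indeed $(\e_j)^2=(\eps^j)^2=-1$ and $\e_j\eps^k=-\eps^k\e_j$, and $(-i\eps^j\e_j)P=-P$ as you checked), and it buys you the full spectrum $2\pi(2|k|+n)+2\pi\sum_j\sigma_j$ in one stroke, with the kernel obtained by matching the two extremal eigenvalues; the paper's ladder argument is more self-contained and its ODE step gives the explicit Gaussian form of kernel vectors directly. Two small points you should make explicit: (i) the identification of the kernel of $Q_0$ (rather than of $Q_0^2$) uses self-adjointness, i.e.\ $\ker Q_0=\ker Q_0^2$, which your essential self-adjointness remark is meant to supply — the paper uses the same identity silently; (ii) the assertion that all $2^n$ sign patterns occur with multiplicity one in $\cS$ deserves a line, e.g.\ note that each minimal idempotent $\prod_j\frac12(1+\sigma_j(-i\eps^j\e_j))$ is nonzero (its coefficient on $1$ in the Clifford basis is $2^{-n}$), or that left multiplication by $\e_j$ flips the $j$-th sign and is invertible, so the $2^n$ joint eigenspaces exhaust the $2^n$-dimensional space and are each one-dimensional.
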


\begin{proof}
In the following argument we will \emph{not} use summation convention. We consider the following operators on $L^2(\ft)\tensor \cS$:
\begin{align*}p_j&=\frac{\partial}{\partial y^j}\otimes \eps^j\\
x_j&=-2\pi iy^j \otimes \e_j\\
q_j&=\frac{1}{2}(1+1\otimes i\e_j\eps^j)\\
A_j&=\frac{1}{2\sqrt{\pi}}(p_j+x_j)
\end{align*}
Since $A_j$ anti-commutes with $1\otimes i\e_j\eps^j$ we have $q_jA_j=A_j(1-q_j)$, hence we can think of $A_j$ as an off-diagonal matrix with respect to $q_j$. We write $A_j$ as $a_j+a_j^*$ where $a_j=q_jA_j=A_j(1-q_j)$ and hence $a_j^*=A_jq_j=(1-q_j)A_j$. We think of $a_j^*$ and $a_j$ as creation and annihilation operators respectively and we define a number operator $N_j=a_j^*a_j$. The involution $i\eps^j$ intertwines $q_j$ with $1-q_j$. We define $A_j',N_j'$ to be the conjugates of $A_j,N_j$ respectively by $i\eps^j$.  Note that
$$A_j'=\frac{1}{2\sqrt{\pi}}(p_j-x_j)$$
and hence
$$A_j^2=(A_j')^2+2\frac{1}{4\pi}[x_j,p_j]=(A_j')^2+1\otimes i\e_j\eps^j.$$
We have $N_j'=A_j'(1-q_j)A_j'=q_j(A_j')^2$. Thus
$$a_ja_j^*=q_jA_j^2q_j=q_jA_j^2=q_j(A_j')^2+q_j(1\otimes i\e_j\eps^j)=N_j'+q_j.$$
Hence the spectrum of $a_ja_j^*$ (viewed as an operator on the range of $q_j$) is the spectrum of $N_j'$ shifted by $1$. However $N_j'$ is conjugate to $N_j=a_j^*a_j$ so we conclude that
$$\Sp (a_ja_j^*)=\Sp (a_j^*a_j)+1.$$
But $\Sp (a_ja_j^*)\setminus\{0\}=\Sp (a_j^*a_j)\setminus\{0\}$ so we conclude that the spectrum is $\Sp (a_j^*a_j)=\{0,1,2,\dots\}$ while $\Sp (a_ja_j^*)=\{1,2,\dots\}$.

Now since the operators $A_j$ pairwise gradedly commute we have
$$Q_0^2=4\pi\sum_{j} A_j^2=4\pi\sum_{j} a_j^*a_j+a_ja_j^*$$
and noting that the summands commute we see that $Q_0^2$ has discrete spectrum.  To show that $(1+Q_0^2)^{-1}$ is compact, it remains to verify that $\ker Q_0$ is finite dimensional (and hence that all eigenspaces are finite dimensional). We have
$$\ker Q_0 = \ker Q_0^2 = \bigcap_j\ker  A_j^2=\bigcap_j \ker A_j.$$

Multiplying the differential equation $(p_j+x_j)f=0$ by $-\exp(\pi (y^j)^2\otimes i\eps^j\e_j)\eps^j$ we see that the kernel of $A_j$ is the space of solutions of the differential equation
$$\frac{\partial }{\partial y^j}(\exp(\pi (y^j)^2\otimes i\eps^j\e_j)f)=0$$whence for $f$ in the kernel we have
$$f(y^1,\dots,y^n)=\exp(-\pi (y^j)^2\otimes i\eps^j\e_j)f(y^1,\dots,y^{j-1},0,y^{j+1},\dots,y^n).$$
Since the solutions must be square integrable the values of $f$ must lie in the $+1$ eigenspace of the involution $i\eps^j\e_j$, that is, the range of the projection $1-q_j$. On this subspace the operator $\exp(-\pi (y^j)^2\otimes i\eps^j\e_j)$ reduces to $e^{-\pi (y^j)^2}(1-q_j)$.  Since the kernel of $Q_0$ is the intersection of the kernels of the operators $A_j$ an element of the kernel must have the form
$$f(y)=e^{-\pi|y|^2}\prod_j(1-q_j)f(0)$$
so the kernel is $1$-dimensional. Indeed the product $\prod_j(1-q_j)$ is the projection $P$ used to define the space of spinors $\cS=\cl(\ft\times\ft^*)P$, and hence $\prod_j(1-q_j)f(0)$ lies in the $1$-dimensional space $P\cS=P\cl(\ft\times\ft^*)P$ which has even grading.
\end{proof}

We have shown that $(L^2(\ft)\tensor \cS,\rho,Q_0)$ defines an unbounded Kasparov triple. It remains to establish  $W$-equivariance.

Let $V$ be a finite dimensional vector space and equip $V\otimes V^*$ with the natural diagonal action of $\GL(V)$. If $V$ is equipped with a non-degenerate symmetric bilinear form $g$ then we can form the Clifford algebra $\cl(V)$ and dually $\cl(V^*)$. The subgroup $\O(g)$ of $\GL(V)$, consisting of those elements preserving $g$, acts diagonally on $\cl(V)\tensor \cl(V^*)$ which we identify with $\cl(V\times V^*)$.

We say that an element $a$ of $\cl(V\times V^*)$ is \emph{symmetric} if there exists a $g$-orthonormal\footnote{We say that $\{\e_j\}$ is $g$-orthonormal if $g_{jk}=\pm\delta_{jk}$ for each $j,k$.} basis $\{\e_j: j=1,\dots,n\}$ with dual basis $\{\eps^j: j=1,\dots,n\}$ such that $a$ can be written as $p(\e_1\eps^1,\dots,\e_n\eps^n)$ where $p(x_1,\dots,x_n)$ is a symmetric polynomial.

\begin{proposition}\label{invariance of symmetric elements}
For any basis $\{\e_j\}$ of $V$ with dual basis $\{\eps^j\}$ for $V^*$, the Einstein sum $\e_j\otimes \eps^j$ in $V\otimes V^*$ is $\GL(V)$-invariant.

Suppose moreover that $V$ is equipped with a non-degenerate symmetric bilinear form $g$ and that the underlying field has characteristic zero. Then every symmetric element of $\cl(V)\tensor \cl(V^*)\cong \cl(V\times V^*)$ is $\O(g)$-invariant.
\end{proposition}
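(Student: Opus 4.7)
The plan is to handle the two parts of the proposition separately, both via short computations once the right identifications are in place.

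For the first part, the cleanest route is to identify $V \otimes V^*$ with $\End(V)$ via the canonical isomorphism; under this identification $\sum_j \e_j \otimes \eps^j$ corresponds to $\id_V$, and the diagonal $\GL(V)$-action on $V \otimes V^*$ becomes conjugation on $\End(V)$, which manifestly fixes $\id_V$. Alternatively, a one-line direct computation works: the change of basis $\e_j \mapsto A^i_j \e_i$ forces $\eps^j \mapsto (A^{-1})^j_k \eps^k$ by duality, and the two matrices cancel via $A^i_j (A^{-1})^j_k = \delta^i_k$.

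For the second part, I would fix a $g$-orthonormal basis $\{\e_j\}$ and set $u_j := \e_j \eps^j$ in $\cl(V \times V^*)$. The two key algebraic facts to establish are that the $u_j$ pairwise commute, and that each $u_j^2$ is a scalar. Both follow from the Clifford anti-commutation relations. Since the bilinear form on $V \oplus V^*$ pairs $V$ trivially with $V^*$, one has $\e_j \eps^k + \eps^k \e_j = 0$ for all $j,k$; within $V$ (and within $V^*$) distinct basis vectors also anti-commute, while $\e_j^2$ and $(\eps^j)^2$ lie in $\C$. Short rearrangements then give $u_j u_k = u_k u_j$ for $j\neq k$ and $u_j^2 = -\e_j^2 (\eps^j)^2 \in \C$.

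Consequently every power $u_j^k$ is either a scalar or a scalar multiple of $u_j$, so each power sum $p_k := \sum_j u_j^k$ lies in the $\C$-subalgebra generated by $p_1$. In characteristic zero Newton's identities express the elementary symmetric polynomials as polynomials in the power sums, so any symmetric polynomial in the $u_j$'s is a polynomial in $p_1$ with scalar coefficients. It then remains to show that $p_1 = \sum_j \e_j \eps^j$ is $\O(g)$-invariant, which is where the first part feeds back in: the $\O(g)$-action extends to $\cl(V \times V^*)$ by the natural action on $V \oplus V^*$ preserving the direct-sum form, so the Clifford multiplication map $V \otimes V^* \to \cl(V \times V^*)$, $v \otimes \alpha \mapsto v \cdot \alpha$, is $\C$-linear and $\O(g)$-equivariant, and it carries $\id_V$ to $p_1$. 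The most delicate step is the verification that the $u_j$ commute and square to scalars; everything else is either formal or a standard invariant-theoretic consequence of characteristic zero.
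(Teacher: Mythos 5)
Your proposal is correct and follows essentially the same route as the paper: part one via the identification $V\otimes V^*\cong\End(V)$ with $\GL(V)$ acting by conjugation, and part two by reducing, in characteristic zero, symmetric polynomials to power sums, using that $(\e_j\eps^j)^2$ is a scalar (so even powers are scalars and odd powers are multiples of $\e_j\eps^j$) and that the invariance of $\sum_j\e_j\eps^j$ is inherited from part one through the $\O(g)$-equivariant map $V\otimes V^*\to\cl(V\times V^*)$. Your explicit check that the $\e_j\eps^j$ pairwise commute is a harmless addition; the paper instead computes $(\e_j\eps^j)^k=(-1)^{k(k-1)/2}(\e_j)^k(\eps^j)^k$ directly, but the content is the same.
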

\begin{proof}
Identifying $V\otimes V^*$ with endomorphisms of $V$ in the natural way, the action of $\GL(V)$ is the action by conjugation and $\e_j\otimes \eps^j$ is the identity which is invariant under conjugation.

For the second part, over a field of characteristic zero the symmetric polynomials are generated by power sum symmetric polynomials $p(x_1,\dots,x_n)=x_1^k+\dots+x_n^k$, so it suffices to consider
\begin{align*}
p(\e_1\eps^1,\dots,\e_n\eps^n)&=(\e_1\eps^1)^k+\dots+(\e_n\eps^n)^k\\
&=(-1)^{k(k-1)/2}\Bigl((\e_1)^k(\eps^1)^k+\dots+(\e_n)^k(\eps^n)^k\Bigr).
\end{align*}
When $k$ is even, writing $(\e_j)^k=(\e_j^2)^{k/2}=(g_{jj})^{k/2}$ and similarly $(\eps^j)^k=(g^{jj})^{k/2}$, we see that each term $(\e_j)^k(\eps^j)^k$ is $1$ since $g_{jj}=g^{jj}=\pm1$ for an orthonormal basis. Thus $p(\e_1\eps^1,\dots,\e_n\eps^n)=n(-1)^{k(k-1)/2}$ which is invariant.

Similarly when $k$ is odd we get $(\e_j)^k(\eps^j)^k=\e_j\eps^j$ so
$$p(\e_1\eps^1,\dots,\e_n\eps^n)=(-1)^{k(k-1)/2}(\e_1\eps^1+\dots+\e_n\eps^n).$$
As the sum $\e_j\otimes \eps^j$ in $V\otimes V^*$ is invariant under $\GL(V)$, it is in particular invariant under $\O(g)$, and hence the sum $\e_j\eps^j$ is $\O(g)$-invariant in the Clifford algebra.
\end{proof}

\bigskip

Returning to our construction, the projection $P$ is a symmetric element of the Clifford algebra and hence is $W$-invariant by Proposition \ref{invariance of symmetric elements}. It follows that $\cS$ carries a representation of $W$. The space $L^2(\ft)$ also carries a representation of $W$ given by the action of $W$ on $\ft$ and we equip $L^2(\ft)\tensor \cS$ with the diagonal action of $W$.

To verify that the representation $\rho$ is $W$-equivariant it suffices to consider the representations of $C(T)$ and $C(T^\vee)$ separately. As the exponential map $\ft\to T$ is $W$-equivariant it is clear that the representation of $C(T)$ on $L^2(\ft)$ by pointwise multiplication is $W$-equivariant.

For $e^{2\pi i\la \eta,\gamma\ra}\in C(T^\vee)$ we have $w\cdot(e^{2\pi i\la \eta,\gamma\ra})=e^{2\pi i\la w^{-1}\cdot\eta,\gamma\ra}=e^{2\pi i\la \eta,w\cdot\gamma\ra}$ thus $\rho(w\cdot(e^{2\pi i\la \eta,\gamma\ra}))=L_{w\cdot\gamma}\otimes 1_\cS=L_{w}L_{\gamma}L_{w^{-1}}\otimes 1_\cS$. Thus the representation of $C(T^\vee)$ is also $W$-equivariant.

It remains to check that the operator $Q_0$ is $W$-equivariant. By definition
$$Q_0=\frac{\partial}{\partial y^j}\otimes \eps^j-2\pi i y^j\otimes \e_j.$$
Now by Proposition \ref{invariance of symmetric elements} $\frac{\partial}{\partial y^j}\otimes \eps^j=\e_j\otimes \eps^j$ is a $\GL(\ft)$-invariant element of $\ft\otimes \ft^*$ and so in particular it is $W$-invariant. Writing $y^j=\la \eps^j, y\ra$ the $W$-invariance of the second term again follows from invariance of $\e_j\otimes \eps^j$.

Hence we conclude the following.

\begin{theorem}
The triple $(L^2(\ft)\tensor \cS,\rho,Q_0)$ constructed above defines an element $\cQ$ of $KK_W(C(T^\vee)\tensor C(T),\C)$.
\end{theorem}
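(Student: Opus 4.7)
The proof will essentially be a synthesis of the ingredients established over the preceding pages, so the plan is to verify each condition in Baaj--Julg's definition of an unbounded Kasparov triple in turn, then invoke the bounded transform to produce the element $\cQ$.

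First I would confirm that $\rho$ actually defines a representation of $C(T^\vee)\tensor C(T)$ on $L^2(\ft)\tensor \cS$. The representations of the two tensor factors were defined individually: for $C(T)$ by pointwise multiplication of $\Gamma$-periodic functions on $\ft$, and for $C(T^\vee)\cong C^*(\Gamma)$ by the translation operators $L_\gamma\otimes 1_\cS$. The key observation, already implicit in the text, is that these two commute: translation by $\gamma$ on $L^2(\ft)$ commutes with pointwise multiplication by any $\Gamma$-periodic function. Hence they assemble into a single representation of the tensor product.

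Next I would assemble the analytic properties of $Q_0$. Self-adjointness on the appropriate Sobolev domain, together with compactness of the resolvent, follows from Theorem \ref{ladder}, where the spectral decomposition via the creation/annihilation operators $a_j,a_j^*$ was worked out. Boundedness of commutators on a dense subalgebra was verified directly in the two paragraphs preceding Theorem \ref{ladder}: smooth functions $f\in C(T)$ contribute commutators equal to bounded multiplication operators $\frac{\partial f}{\partial y^j}\otimes\eps^j$; and the generators $e^{2\pi i\la\eta,\gamma\ra}$ of $C(T^\vee)$ yield commutators with the linear term $2\pi i y^j\otimes\e_j$ which reduce to bounded operators via the identity $L_\gamma(2\pi iy^j)L_\gamma^*=2\pi i(y^j-\gamma^j)$. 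These two classes generate dense subalgebras of the tensor factors, and hence a dense subalgebra of the tensor product.

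The remaining step is $W$-equivariance, and this is where the slightly delicate issue of the spinor space enters. I would invoke Proposition \ref{invariance of symmetric elements} twice: once to note that the projection $P=\prod_j\tfrac12(1-i\e_j\eps^j)$ is a symmetric element of $\cl(\ft\times\ft^*)$ and hence $W$-invariant, so that $W$ acts on $\cS=\cl(\ft\times\ft^*)P$; and once to note that the symbol $\e_j\otimes\eps^j$ is $\GL(\ft)$-invariant, so both summands of $Q_0$ are $W$-equivariant. Equivariance of $\rho$ on $C(T)$ is immediate from equivariance of the exponential map, while equivariance on $C(T^\vee)$ follows from the identity $L_{w\gamma}=L_w L_\gamma L_{w^{-1}}$ combined with the definition of the dual $W$-action on $\Gamma^\vee$, as already sketched.

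Finally I would conclude by applying the bounded transform $Q_0\mapsto Q_0(1+Q_0^2)^{-1/2}$, which is well-defined since $Q_0$ is self-adjoint with compact resolvent, to convert the unbounded triple into a bounded equivariant Kasparov $(C(T^\vee)\tensor C(T),\C)$-cycle. The main obstacle in this assembly is really just the bookkeeping to ensure that all the $W$-equivariance assertions hold simultaneously with the module, representation, and operator structures; once the symmetric-element criterion of Proposition \ref{invariance of symmetric elements} is in hand, no further computation is required.
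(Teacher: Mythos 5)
Your proposal is correct and follows essentially the same route as the paper: bounded commutators on a dense subalgebra, compact resolvent from Theorem \ref{ladder}, and $W$-equivariance of $P$, $Q_0$ and $\rho$ via Proposition \ref{invariance of symmetric elements}, assembled into an unbounded equivariant Kasparov triple. Your explicit remarks that the two factor representations commute and that the Baaj--Julg bounded transform finishes the construction are points the paper leaves implicit, but they add nothing beyond the paper's argument.
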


\subsection{The Kasparov product $\cP\otimes_{C(T^\vee)}\cQ$} We will compute the Kasparov product of the Poincar\'e line bundle $\cP\in KK_W(\C,C(T)\tensor C(T^\vee))$ with our inverse $\cQ\in KK_W(C(T^\vee)\tensor C(T),\C)$ where the product is taken over $C(T^\vee)$ (not $C(T)\tensor C(T^\vee)$).

Recall that $\cP$ is given by the Kasparov triple $(\cE,1,0)$ where $\cE$ is the completion of $C_c(\ft)$ with the inner product
$$\la \phi_1,\phi_2\ra(x,\eta)=\sum_{\alpha,\beta \in\Gamma} \overline{\phi_1(x-\alpha)}\phi_2(x-\beta)e^{2\pi i \la \eta,\beta-\alpha\ra}$$
in $C(T)\tensor C(T^\vee)$. As above $\cQ$ is given by the triple $(L^2(\ft)\tensor \cS,\rho,Q_0)$.

To form the Kasparov product we must take that tensor product of $\cE$ with $L^2(\ft)\tensor \cS$ over $C(T^\vee)$ and as the operator in the first triple is zero, the operator required for the Kasparov product can be any connection for $Q_0$.

We note that the representation $\rho$ is the identity on $\cS$ and hence
$$\cE\tensor_{C(T^\vee)}(L^2(\ft)\tensor \cS)=(\cE\tensor_{C(T^\vee)}L^2(\ft))\tensor \cS.$$
Thus we can focus on identifying the tensor product $\cE\tensor_{C(T^\vee)}L^2(\ft)$. By abuse of notation we will also let $\rho$ denote the representation of $C(T)\tensor C(T^\vee)$ on $L^2(\ft)$.

As we are taking the tensor product over $C(T^\vee)$, not over $C(T)\tensor C(T^\vee)$, we are forming the Hilbert module
$$(\cE\tensor C(T))\tensor_{C(T)\tensor C(T^\vee)\tensor C(T)}(C(T)\tensor L^2(\ft))$$
however since the algebra $C(T)$ is unital, it suffices to consider elementary tensors of the form $(\phi\otimes 1)\otimes (1\otimes \xi)$. Where there is no risk of confusion we will abbreviate these are $\phi\otimes \xi$

Let $\phi_1,\phi_2\in C_c(\ft)$ and let $\xi_1,\xi_2$ be elements of $L^2(\ft)$.  Then
\begin{align*}
\la \phi_1\otimes\xi_1,\phi_2\otimes\xi_2\ra &=\la 1\otimes \xi_1,(1\otimes\rho)(\la \phi_1,\phi_2\ra\otimes 1)(1\otimes \xi_2)\ra.
\end{align*}
The operator $(1\otimes\rho)(\la \phi_1,\phi_2\ra\otimes 1)$ corresponds to a field of operators
\begin{align*}(1\otimes\rho)(\la \phi_1,\phi_2\ra\otimes 1)(x)&=\sum_{\alpha,\beta \in\Gamma} \overline{\phi_1(x-\alpha)}\phi_2(x-\beta)\otimes \rho(e^{2\pi i \la \eta,\beta-\alpha\ra}\otimes 1)\\
&=\sum_{\alpha,\beta \in\Gamma} \overline{\phi_1(x-\alpha)}\phi_2(x-\beta)\otimes L_\alpha^*L_\beta
\end{align*}
and so
\begin{align*}
\la \phi_1\otimes\xi_1,\phi_2\otimes\xi_2\ra(x) &=\sum_{\alpha,\beta \in\Gamma} \overline{\phi_1(x-\alpha)}\phi_2(x-\beta)\la L_\alpha\xi_1, L_\beta\xi_2\ra\\
&=\la \sum_{\alpha\in \Gamma}\phi_1(x-\alpha)L_\alpha\xi_1,\sum_{\beta\in \Gamma}\phi_2(x-\beta) L_\beta\xi_2\ra.
\end{align*}
We note that $x\mapsto \sum_{\alpha\in \Gamma}\phi_1(x-\alpha)L_\alpha\xi_1$ is a continuous $\Gamma$-equivariant (and hence bounded) function from $\ft$ to $L^2(\ft)$. Let $C(\ft,L^2(\ft))^\Gamma$ denote the space of such functions equipped with the $C(T)$ module structure of pointwise multiplication in the first variable and gives the pointwise inner product $\la g_1,g_2\ra(x)=\la g_1(x),g_2(x)\ra$. We remark that equivariance implies this inner product is a $\Gamma$-periodic function on $\ft$.

The above calculation show that $\cE\tensor_{C(T^\vee)}L^2(\ft)$ maps isometrically into $C(\ft,L^2(\ft))^\Gamma$ via the map
$$\phi\otimes\xi \mapsto \sum_{\alpha\in \Gamma}\phi(x-\alpha)L_\alpha\xi.$$
Moreover this map is surjective.  To see this, note that if $\phi$ is supported inside a single fundamental domain then for $x$ in that fundamental domain we obtain the function $\phi(x)\xi$.  This is extended by equivariance to a function on $\ft$, and using a partition of unity one can approximate an arbitrary element of $C(\ft,L^2(\ft))^\Gamma$ by sums of functions of this form.

We now remark that $C(\ft,L^2(\ft))^\Gamma$ is in fact isomorphic to the Hilbert module $C(T, L^2(\ft))$ via a change of variables. Given $g\in C(\ft,L^2(\ft))^\Gamma$, let $\tilde h(x)=L_{-x}g(x)$. The $\Gamma$-equivariance of $g$ ensures that $g(\gamma+x)=L_\gamma g(x)$ whence
$$\tilde h(\gamma+x)=L_{-x-\gamma}g(\gamma+x)=L_{-x-\gamma}L_\gamma g(x)=L_{-x}g(x)=\tilde h(x).$$
As $\tilde h$ is a $\Gamma$-periodic function from $\ft$ to $L^2(\ft)$ we identify it via the exponential map with the continuous function $h$ from $T$ to $L^2(\ft)$ such that $\tilde h(x)=h(\exp(x))$. Hence $g\mapsto h$ defines the isomorphism $C(\ft,L^2(\ft))^\Gamma\cong C(T,L^2(\ft))$.

We now state the following theorem.

\begin{theorem}\label{Hilbert modules isomorphism}
The Hilbert module $\cE\tensor_{C(T^\vee)}(L^2(\ft)\tensor \cS)$ is isomorphic to $C(T,L^2(\ft)\tensor \cS)$ via the map
$$\phi\otimes (\xi\otimes s) \mapsto \sum_{\alpha\in \Gamma}\phi(x-\alpha)L_{\alpha-x}\xi\otimes s.$$
The representation of $C(T)$ on $L^2(\ft)$ induces a representation $\sigma$ of $C(T)$ on $C(T,L^2(\ft)\tensor \cS)$ defined by
$$[\sigma(f)h](\exp(x),y)=f(\exp(x+y))h(\exp(x),y).$$
Here the notation $h(\exp(x),y)$ denotes the value at the point $y\in\ft$ of $h(\exp(x))\in L^2(\ft)\tensor \cS$.
\end{theorem}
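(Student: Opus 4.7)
The plan is to combine the two explicit Hilbert-module isomorphisms established in the discussion preceding the theorem, namely
\[
\cE\tensor_{C(T^\vee)}L^2(\ft)\cong C(\ft,L^2(\ft))^\Gamma\cong C(T,L^2(\ft)),
\]
with the trivial external tensor factor $\cS$, and then to trace the left representation of $C(T)$ coming from $\rho$ through these identifications. Since $\rho$ acts as the identity on $\cS$ and the $C(T^\vee)$-valued inner product on $\cE\tensor L^2(\ft)$ is unaffected by the $\cS$-factor, the identification
\[
\cE\tensor_{C(T^\vee)}(L^2(\ft)\tensor\cS)\cong(\cE\tensor_{C(T^\vee)}L^2(\ft))\tensor\cS
\]
is immediate, reducing the work to the case $\cS=\C$ and then tensoring back.

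First I would verify the explicit formula. Composing the two preceding maps, the element $\phi\otimes\xi$ is sent to $x\mapsto\sum_{\alpha\in\Gamma}\phi(x-\alpha)L_\alpha\xi$ in $C(\ft,L^2(\ft))^\Gamma$, and then the change of variables $h(\exp(x))=L_{-x}g(x)$ yields
\[
h(\exp(x))=\sum_{\alpha\in\Gamma}\phi(x-\alpha)L_{-x}L_\alpha\xi=\sum_{\alpha\in\Gamma}\phi(x-\alpha)L_{\alpha-x}\xi,
\]
which matches the formula in the statement upon tensoring with $s\in\cS$. That this is a Hilbert-module isomorphism follows from the two already-established isomorphisms which it composes.

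Next I would compute the induced representation of $C(T)$. The left action of $C(T)$ on the Kasparov product comes from $\rho$ restricted to $1\tensor C(T)\subset C(T^\vee)\tensor C(T)$, which acts on $L^2(\ft)$ by pointwise multiplication $\xi\mapsto\tilde f\xi$ where $\tilde f$ is the $\Gamma$-periodic lift of $f$. Since $\tilde f$ is $\Gamma$-periodic it commutes with every translation $L_\alpha$, so under the first isomorphism this action becomes the pointwise-in-$y$ multiplication $g(x)(y)\mapsto\tilde f(y)g(x)(y)$ on $C(\ft,L^2(\ft))^\Gamma$. Under the subsequent change of variables $h(\exp(x))(y)=g(x)(y+x)$, the multiplier $\tilde f$ is evaluated at the shifted variable, and since $\tilde f(y+x)=f(\exp(x+y))$, one obtains
\[
(\sigma(f)h)(\exp(x),y)=f(\exp(x+y))h(\exp(x),y),
\]
as claimed.

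The main subtlety, though not a deep one, is keeping track of how the translation $L_{-x}$ appearing in the change of variables interacts with the pointwise multiplication operator: it is precisely this shift which converts plain multiplication by $\tilde f$ in the $y$-variable into the diagonal-type formula $f(\exp(x+y))$ featuring in $\sigma$, reflecting the intertwining of the two $C(T)$-factors in the final representation.
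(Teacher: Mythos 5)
Your proposal is correct and follows essentially the same route as the paper: reduce to the case $\cS=\C$ via $\cE\tensor_{C(T^\vee)}(L^2(\ft)\tensor\cS)\cong(\cE\tensor_{C(T^\vee)}L^2(\ft))\tensor\cS$, invoke the previously established isomorphism onto $C(T,L^2(\ft))$, and then track multiplication by the periodic lift $\tilde f$ through the change of variables, using $\Gamma$-periodicity (equivalently, commutation with the translations $L_\alpha$) to arrive at $f(\exp(x+y))$. The paper performs this last computation by evaluating the image of $\phi\otimes\tilde f\xi\otimes s$ directly at $y$, whereas you factor it through the intermediate module $C(\ft,L^2(\ft))^\Gamma$; the two computations are the same in substance.
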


\begin{proof}
We recall that $\cE\tensor_{C(T^\vee)}(L^2(\ft)\tensor \cS)$ is isomorphic to $(\cE\tensor_{C(T^\vee)}L^2(\ft))\tensor \cS$ and we have established that $\cE\tensor_{C(T^\vee)}L^2(\ft)\cong C(T,L^2(\ft))$. This provides the claimed isomorphism.

It remains to identify the representation.  Given $f\in C(T)$ let $\tilde f(x)=f(\exp(x))$ denote the corresponding periodic function on $\ft$. By definition the representation of $C(T)$ on $\cE\tensor_{C(T^\vee)}(L^2(\ft)\tensor \cS)$ takes $\phi\otimes \xi\otimes s$ to $\phi\otimes \tilde f\xi\otimes s$. This is mapped under the isomorphism to the $\Gamma$-periodic function on $\ft$ whose value at $x$ is
$$\sum_{\alpha\in \Gamma}\phi(x-\alpha)L_{\alpha-x}(\tilde f\xi)\otimes s \in L^2(\ft)\tensor \cS.$$
Evaluating this element of $L^2(\ft)\tensor \cS$ at a point $y\in\ft$ we have 
$$\sum_{\alpha\in \Gamma}\phi(x-\alpha)\tilde f(x-\alpha+y)\xi (x-\alpha+y)\otimes s=\tilde f(x+y)\sum_{\alpha\in \Gamma}\phi(x-\alpha)[L_{\alpha-x}\xi](y)\otimes s$$
by $\Gamma$-periodicity of $\tilde f$. Thus $\sigma(f)$ pointwise multiplies the image of $\phi\otimes \xi\otimes s$ in $C(T,L^2(\ft)\tensor \cS)$ by $\tilde f(x+y)=f(\exp(x+y))$ as claimed.
\end{proof}

We now define an operator $\bQ$ on $C(T,L^2(\ft)\tensor \cS)$ by
$$(\bQ h)(\exp(x))=Q_0(h(\exp(x)))$$
for $h\in C(T,L^2(\ft)\tensor \cS)$.

\begin{theorem}
The unbounded operator $\bQ$ is a connection for $Q_0$ in the sense that the bounded operator $\bF=\bQ(1+\bQ^2)^{-1/2}$ is a connection for $F_0=Q_0(1+Q_0^2)^{-1/2}$, after making the identification of Hilbert modules as in Theorem \ref{Hilbert modules isomorphism}.
\end{theorem}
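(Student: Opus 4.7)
The plan is to verify the connection condition at the \emph{unbounded} level for $\bQ$ relative to $Q_0$, and then invoke the standard passage to the bounded transform. Concretely, using the identification of Theorem \ref{Hilbert modules isomorphism}, for $\phi \in C_c^\infty(\ft) \subset \cE$ let $T_\phi : L^2(\ft)\tensor \cS \to C(T, L^2(\ft)\tensor \cS)$ denote the creation operator $\zeta \mapsto \phi \otimes \zeta$. A standard criterion (e.g.\ from Kucerovsky's work on unbounded $KK$-products, or the Baaj--Julg framework) guarantees that $\bF$ is a connection for $F_0$ provided that, on a core for $Q_0$, the commutator $\bQ T_\phi - T_\phi Q_0$ extends to a bounded operator for all $\phi$ in a dense subspace of $\cE$, and similarly for its adjoint.

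The first step is to unwind $\bQ T_\phi$ and $T_\phi Q_0$ applied to an element $\xi\otimes s$. By Theorem \ref{Hilbert modules isomorphism}, $\phi\otimes(\xi\otimes s)$ corresponds to the function $\exp(x)\mapsto\bigl(y\mapsto\sum_{\alpha\in\Gamma}\phi(x-\alpha)\,\xi(y-\alpha+x)\otimes s\bigr)$, and $\bQ$ acts fibrewise in $x$ by $Q_0 = \partial_{y^j}\otimes\eps^j - 2\pi i y^j\otimes\e_j$. The derivative term commutes past translation (both the $x$-dependence of $\phi(x-\alpha)$ and the $x$-dependence of $\xi(y-\alpha+x)$ are translations by the \emph{same} amount $\alpha-x$, and $Q_0$ only sees the $y$-variable), so only the multiplication term contributes. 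A short computation gives
$$(\bQ T_\phi - T_\phi Q_0)(\xi\otimes s)(\exp(x))(y) = -2\pi i \sum_{\alpha\in\Gamma}\phi(x-\alpha)(\alpha-x)^j\,\xi(y-\alpha+x)\otimes\e_j s.$$
Since $\phi$ has compact support, the condition $\phi(x-\alpha)\neq 0$ forces $\alpha-x\in -\operatorname{supp}\phi$, so $|(\alpha-x)^j|$ is uniformly bounded, and for each $x$ only finitely many terms contribute. The right-hand side is therefore a bounded multiplication-type operator, taking values in $C(T, L^2(\ft)\tensor \cS)$.

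The remaining work is largely formal: the analogous bound for $T_\phi^*\bQ - Q_0 T_\phi^*$ follows by taking adjoints in the identity above (using that $\bQ$ and $Q_0$ are symmetric on the relevant cores), and the smooth vectors $\phi\in C_c^\infty(\ft)$ paired with Schwartz functions in $\xi$ supply a common core on which all of the manipulations are justified. The step I expect to need the most care is this final passage from unbounded to bounded connection: one must check that $\bQ$ is not only symmetric but essentially self-adjoint and regular on the Hilbert $C(T)$-module $C(T,L^2(\ft)\tensor \cS)$. Both properties descend from the corresponding statements for $Q_0$, which are guaranteed by Theorem \ref{ladder} (self-adjointness with compact resolvent on $L^2(\ft)\tensor\cS$) together with the fact that $\bQ$ is the constant-fibre operator $Q_0$ on a trivial Hilbert-module bundle over $T$. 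Once these properties are in hand, the bounded-transform theorem converts the unbounded connection estimate above into the statement that $\bF=\bQ(1+\bQ^2)^{-1/2}$ is a connection for $F_0$.
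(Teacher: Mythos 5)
Your proposal is correct in substance, and its core computation is the same one the paper relies on, but it is packaged along a genuinely different route. The paper stays at the bounded-transform level: it sets $F_x=(L_x\otimes 1_\cS)F_0(L_{-x}\otimes 1_\cS)$, deduces from the boundedness of $[L_x\otimes 1_\cS,Q_0]$ (in the spirit of Baaj--Julg) that $[L_x\otimes 1_\cS,F_0]$ is \emph{compact} for each $x$, and then verifies the Connes--Skandalis connection condition fibrewise: the two ways around the diagram differ by $\sum_\alpha\phi(x-\alpha)[F_0,L_{\alpha-x}\otimes 1_\cS]$, a continuous function from $T$ into compact operators, hence a compact operator from $L^2(\ft)\tensor\cS$ to $C(T,L^2(\ft)\tensor\cS)$. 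You instead verify the connection condition at the unbounded level -- your identity $(\bQ T_\phi-T_\phi Q_0)(\xi\otimes s)(\exp(x))(y)=-2\pi i\sum_\alpha\phi(x-\alpha)(\alpha-x)^j\,\xi(y-\alpha+x)\otimes\e_j s$ is exactly right and is the global repackaging of the paper's observation that conjugating $Q_0$ by a translation changes it by a bounded multiplication operator -- and then appeal to the Kucerovsky/Baaj--Julg machinery to pass to the bounded transform. That buys a cleaner, more citable criterion at the cost of checking its hypotheses (self-adjointness and regularity of $\bQ$, which as you say are immediate since $\bQ=1_{C(T)}\otimes Q_0$ on $C(T)\otimes(L^2(\ft)\tensor\cS)$). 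One caution: a connection requires the differences $\bF T_\phi-T_\phi F_0$ to be \emph{compact}, not merely bounded, and boundedness of the unbounded commutator alone does not deliver this; in the standard criterion the compactness is supplied by the resolvent estimates using the cycle properties, here concretely the compact resolvent of $Q_0$ from Theorem \ref{ladder} (so that terms like $(\lambda^2+1+Q_0^2)^{-1}$ appearing in the integral formula for the bounded transform are compact). You cite Theorem \ref{ladder} only for self-adjointness and regularity of $\bQ$; you should also cite it (or the fact that $\cQ$ and the candidate product triple are honest unbounded Kasparov modules) at this point, exactly as the paper implicitly does when it upgrades bounded commutators with $Q_0$ to compact commutators with $F_0$. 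With that attribution made explicit, your argument is complete.
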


\begin{proof}
Let $Q_x=(L_x\otimes 1_\cS)Q_0(L_{-x}\otimes 1_\cS)$ and correspondingly define
$$F_x=Q_x(1+Q_x^2)^{-1/2}=(L_x\otimes 1_\cS)F_0(L_{-x}\otimes 1_\cS).$$
The commutators $[L_x\otimes 1_\cS,Q_0]$ are bounded (the argument is exactly as for $[L_\gamma\otimes 1_\cS,Q_0]$ in Section \ref{K-homology element}). It follows (in the spirit of Baaj-Julg, \cite{BJ}) that the commutators $[L_x\otimes 1_\cS,F_0]$ are compact. Thus $F_x-F_0$ is a compact operator for all $x\in \ft$.

To show that $\bF$ is a connection for $F_0$ we must show that for $\phi\in \cE$, the diagram
$$\begin{CD}
L^2(\ft)\tensor \cS @>F_0>> L^2(\ft)\tensor \cS\\
@V\phi\otimes VV @V\phi\otimes VV\\
\cE\otimes L^2(\ft)\tensor \cS @. \cE\otimes L^2(\ft)\tensor \cS\\
@V\cong VV @V\cong VV\\
C(T,L^2(\ft)\tensor \cS) @>>\bF> C(T,L^2(\ft)\tensor \cS)
\end{CD}$$
commutes modulo compact operators.

Following the diagram around the right-hand side we have 
$$\xi\otimes s \mapsto \sum_{\alpha\in\Gamma}\phi(x-\alpha)(L_{\alpha-x}\otimes 1_\cS)F_0(\xi\otimes s)$$
while following the left-hand side we have
$$\bF\Big[\sum_{\alpha\in\Gamma}\phi(x-\alpha)(L_{\alpha-x}\otimes 1_\cS)(\xi\otimes s)\Big]=\sum_{\alpha\in\Gamma}\phi(x-\alpha)F_0(L_{\alpha-x}\otimes 1_\cS)(\xi\otimes s).$$
As $[F_0,L_{\alpha-x}\otimes 1_\cS]$ is a compact operator for each $x$ and the sum is finite for each $x$, the difference between the two paths around the diagram is a function from $T$ to compact operators on $L^2(\ft)\tensor\cS$. It is thus a compact operator from the Hilbert space $L^2(\ft)\tensor\cS$ to the Hilbert module $C(T,L^2(\ft)\tensor \cS)$ as required.
\end{proof}

\begin{theorem}\label{first Kasparov product}
The Kasparov product $\cP\otimes_{C(T^\vee)}\cQ$ is the identity $1_{C(T)}$ in $KK_W(C(T),C(T))$.
\end{theorem}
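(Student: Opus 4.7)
By Theorem~\ref{Hilbert modules isomorphism} and the connection argument, $\cP \otimes_{C(T^\vee)} \cQ$ is represented by the triple $(C(T, L^2(\ft) \tensor \cS), \sigma, \bF)$, where $\bF$ is the bounded transform of $\bQ$ and $[\sigma(f)h](\exp(x), y) = \tilde f(x + y)\, h(\exp(x), y)$.  My plan is to construct an explicit operator homotopy deforming this to an external Kasparov product that manifestly represents $1_{C(T)}$.

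For $t \in [0, 1]$, let
\[
[\sigma_t(f) h](\exp(x), y) = \tilde f(x + ty)\, h(\exp(x), y),
\]
so that $\sigma_1 = \sigma$ and, at $t = 0$, $\sigma_0(f)(h)(\exp(x), y) = f(\exp(x))\, h(\exp(x), y)$, i.e.\ multiplication by $f$ on the first factor under the external factorisation $C(T, L^2(\ft) \tensor \cS) \cong C(T) \otimes (L^2(\ft) \tensor \cS)$.  Each $\sigma_t$ is $W$-equivariant and compatible with the right module structure, so the family $(C(T, L^2(\ft) \tensor \cS), \sigma_t, \bF)$ provides the desired homotopy provided each triple is a Kasparov cycle.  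The only non-trivial point is compactness of $[\sigma_t(f), \bF]$ for $f$ in a dense subalgebra of $C(T)$.  For smooth $f$ the pointwise commutator $[\sigma_t(f), \bQ]$ equals the bounded multiplication operator $t(\partial_{y^j} \tilde f)(x + ty) \otimes \eps^j$.  Combined with compactness of $(1 + \bQ^2)^{-1}$ as a Hilbert-module operator---a consequence of Theorem~\ref{ladder} together with $\cK(C(T, H)) = C(T) \otimes \cK(H)$---a Baaj--Julg argument~\cite{BJ} then yields compactness of $[\sigma_t(f), \bF]$, continuously in $t$.

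At $t = 0$ the resulting cycle is the external Kasparov product of $(C(T), \id, 0) \in KK_W(C(T), C(T))$ with $(L^2(\ft) \tensor \cS, 1, F_0) \in KK_W(\C, \C) = R(W)$.  The latter class is the $W$-equivariant Fredholm index of $F_0$: by Theorem~\ref{ladder} its kernel is one-dimensional, spanned in even grading by the Gaussian spinor $c^{-1/2} e^{-\pi|y|^2} \otimes P$, and the cokernel vanishes.  This kernel vector is $W$-invariant---the Gaussian because $W$ acts isometrically on $\ft$, and the projection $P$ as a symmetric element of the Clifford algebra via Proposition~\ref{invariance of symmetric elements}---so the index is the trivial character $1 \in R(W)$, and the $t = 0$ cycle represents $1_{C(T)} \cdot 1 = 1_{C(T)}$.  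The main obstacle is establishing compactness of $[\sigma_t(f), \bF]$ in the Hilbert-module sense uniformly along the homotopy; once this is in place, the identification of the $t = 0$ cycle with the external product and the $W$-equivariant Fredholm index computation for $F_0$ are routine.
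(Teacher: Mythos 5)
Your proposal is correct and follows essentially the same route as the paper: the same homotopy of representations $\sigma_t$ (the paper's $\sigma_\lambda$), bounded commutators with the smooth subalgebra, and Theorem~\ref{ladder} to identify the endpoint, the only cosmetic differences being that you work with the bounded transform via a Baaj--Julg argument while the paper stays in the unbounded picture, and at $t=0$ you read off the class as an external product with the $W$-equivariant index in $R(W)$ whereas the paper decomposes the module as $C(T,\ker Q_0)\oplus C(T,\ker Q_0^\perp)$. Your explicit check that the kernel vector carries the trivial $W$-representation is a point the paper leaves implicit, and is a welcome addition.
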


\begin{proof}
We define a homotopy of representations of $C(T)$ on $C(T,L^2(\ft)\tensor \cS)$ by
$$[\sigma_\lambda(f)h](\exp(x),y)=f(\exp(x+\lambda y))h(\exp(x),y)$$
and note that $\sigma_1=\sigma$ while $\sigma_0$ is simply the representation of $C(T)$ on $C(T,L^2(\ft)\tensor \cS)$ by pointwise multiplication of functions on $T$. It is easy to see that these representations are $W$-equivariant.

Let $f$ be a smooth function on $T$ and let $h\in C(T,L^2(\ft)\tensor \cS)$. Let $\tilde{f}(x)=f(\exp(x))$ and let $\tilde{h}(x,y)=h(\exp(x),y)$.  Then
\begin{align*}
([\bQ,\sigma_\lambda(f)]h)&(\exp(x),y)\\
=\,\,&\big[\frac{\partial}{\partial y^j}(\eps^j \tilde f(x+\lambda y)\tilde h(x,y)) - 2\pi i y^j\e_j\tilde f(x+\lambda y)\tilde h(x,y)\big] \\
&-\big[\tilde f(x+\lambda y)\frac{\partial}{\partial y^j}(\eps^j \tilde h(x,y)) - \tilde f(x+\lambda y)2\pi i y^j\e_j\tilde h(x,y)\big]\\
=\,\,&\frac{\partial}{\partial y^j}(\tilde f(x+\lambda y))(\eps^j \tilde h(x,y)).
\end{align*}
For each $\lambda$ the operator $\bQ$ thus commutes with the representation $\sigma_\lambda$ modulo bounded operators on a dense subalgebra of $C(T)$. Hence for each $\lambda$ $(C(T,L^2(\ft)\tensor \cS),\sigma_\lambda,\bQ)$ defines an unbounded Kasparov triple.

This is true in particular for $\lambda=1$ and thus $(C(T,L^2(\ft)\tensor \cS),\sigma,\bQ)$ is a Kasparov triple so as the operator in the triple $\cP$ is zero while $\bQ$ is a connection for $Q_0$ it follows that $\cP\otimes_{C(T^\vee)} \cQ=(C(T,L^2(\ft)\tensor \cS),\sigma,\bQ)$ in $KK_W(C(T),C(T))$.

Now applying the homotopy we have $\cP\otimes_{C(T^\vee)} \cQ=(C(T,L^2(\ft)\tensor \cS),\sigma_0,\bQ)$. Since $\sigma_0$ commutes exactly with the operator $\bQ$ the representation$\sigma_0$ respects the direct sum decomposition of $C(T,L^2(\ft)\tensor \cS)$ as $C(T,\ker(Q_0))\oplus C(T,\ker(Q_0)^\perp)$. The operator $\bQ$ is invertible on the second summand (and commutes with the representation) and hence the corresponding Kasparov triple $(C(T,\ker(Q_0)^\perp),\sigma_0|_{C(T,\ker(Q_0)^\perp)},\bQ|_{C(T,\ker(Q_0)^\perp)})$ is zero in $KK$-theory.

We thus conclude that $\cP\otimes_{C(T^\vee)} \cQ=(C(T,\ker(Q_0)),\sigma_0|_{C(T,\ker(Q_0))},0)$. Since $\ker Q_0$ is 1-dimensional (Theorem \ref{ladder}) the module $C(T,\ker(Q_0))$ is isomorphic to $C(T)$ and the restriction of $\sigma_0$ to this is the identity representation of $C(T)$ on itself.  Thus $\cP\otimes_{C(T^\vee)} \cQ=(C(T),1,0)=1_{C(T)}$.
\end{proof}

\subsection{The Kasparov product $\cP\otimes_{C(T)}\cQ$}

We begin by considering the dual picture, which exchanges the roles of $T$ and $T^\vee$. There exist elements $\cQ^\vee \in KK_W(C(T)\tensor C(T^\vee),\C)$ and $\cP^\vee\in KK_W(\C,C(T^\vee)\tensor C(T))$ for which the result of the previous section implies $\cP^\vee\otimes_{C(T)}\cQ^\vee=1_{C(T^\vee)}$ in $KK_W(C(T^\vee),C(T^\vee))$.

We will show that there is an isomorphism
$$\theta:C(T^\vee)\tensor C(T)\to C(T)\tensor C(T^\vee)$$
such that $\cQ=\theta^*\cQ^\vee$ and $\cP=\theta^{-1}_*\cP^\vee$. This will imply that $\cP\otimes_{C(T)}\cQ=\cP^\vee\otimes_{C(T)}\cQ^\vee=1_{C(T^\vee)}$ in $KK_W(C(T^\vee),C(T^\vee))$ and hence will complete the proof of the Poincar\'e duality between $C(T)$ and $C(T^\vee)$.

We recall that $\cQ$ is represented by the (unbounded) Kasparov triple $(L^2(\ft)\tensor \cS,\rho,Q_0)$ where $\cS=\cl(\ft\times\ft^*)P$, for $P$ the projection $P=\prod_{j}\frac 12(1-i\e_j\eps^j)$ and
$$Q_0=\frac{\partial}{\partial y^j}\otimes \eps^j-2\pi i y^j\otimes \e_j.$$
For $\gamma\in \Gamma$, $\chi\in\Gamma^\vee$ and correspondingly $e^{2\pi i\la \eta,\gamma\ra}$ in $C(T^\vee)$, $e^{2\pi i\la \chi,x\ra }$ in $C(T)$, the representation $\rho$ of $C(T^\vee)\tensor C(T)$ is defined by
$$\rho(e^{2\pi i \la \eta,\gamma\ra})(\xi\otimes s)=L_\gamma\xi\otimes s, \text { and }\rho(e^{2\pi i\la \chi,x\ra})(\xi\otimes s)=e^{2\pi i\la \chi,x\ra}\xi \otimes s.$$

By definition $\cQ^\vee$ is represented by the triple $(L^2(\ft^*)\tensor \cS^\vee,\rho^\vee,Q_0^\vee)$ where $\cS^\vee=\cl(\ft^*\times\ft)P^\vee$, for $P^\vee$ the projection $P^\vee=\prod_{j}\frac 12(1-i\eps^j\e_j)$ and
$$Q_0^\vee=\frac{\partial}{\partial \eta_j}\otimes \e_j-2\pi i \eta_j\otimes \eps^j.$$
For $\gamma\in \Gamma$, $\chi\in\Gamma^\vee$ and correspondingly $e^{2\pi i\la \eta,\gamma\ra}$ in $C(T^\vee)$, $e^{2\pi i\la \chi,x\ra }$ in $C(T)$, the representation $\rho^\vee$ of $C(T)\tensor C(T^\vee)$ is now defined by
$$\rho^\vee(e^{2\pi i\la \chi,x\ra})(\xi^\vee\otimes s^\vee)=L^\vee_\chi\xi^\vee \otimes s^\vee, \text { and }\rho^\vee(e^{2\pi i \la \eta,\gamma\ra})(\xi^\vee\otimes s^\vee)=e^{2\pi i \la \eta,\gamma\ra}\xi^\vee\otimes s^\vee.$$
Here $L^\vee_\chi$ denotes the translation action of $\chi\in\Gamma^\vee$ on $L^2(\ft^*)$.

In our notation, $\eps^j$ is again an orthonormal basis for $\ft^*$ and $\e_j$ is an orthonormal basis for $\ft$. We can canonically identify $\cl(\ft\times\ft^*)$ with $\cl(\ft^*\times\ft)$, and hence think of both $\cS$ and $\cS^\vee$ as subspaces of this algebra.

\bigskip

We can identify $L^2(\ft)$ with $L^2(\ft^*)$ via the Fourier transform: let $\cF:L^2(\ft)\to L^2(\ft^*)$ denote the Fourier transform isomorphism
$$[\cF\xi](\eta)=\int_\ft \xi(y)e^{2\pi i\la \eta,y\ra}\,dy.$$
It is easy to see that this is $W$-equivariant.

To identify $\cS$ with $\cS^\vee$, let $u\in \cl(\ft\times\ft^*)$ be defined by $u=\eps^1\eps^2\dots\eps^n$ when $n=\dim(\ft)$ is even and $u=e_1e_2\dots e_n$ when $n$ is odd.

\begin{lemma}
Conjugation by $u$ defines a $W$-equivariant unitary isomorphism $\cU:\cS\to\cS^\vee$. For $a\in\cl(\ft\times\ft^*)$ (viewed as an operator on $\cS$ by Clifford multiplication) $\cU a\cU^*$ is Clifford multiplication by $uau^*$ on $\cS^\vee$ and in particular $\cU \e_j\cU^*=\e_j$ while $\cU \eps^j\cU^*=-\eps^j$.
\end{lemma}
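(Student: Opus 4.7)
The plan is to build everything from the single computation of how conjugation by $u$ acts on the generators $\e_j$ and $\eps^j$, and then read off all the stated properties as consequences.

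First I would verify that $u$ is unitary in $\cl(\ft\times\ft^*)$. Since $\e_j^*=-\e_j$ and $(\eps^j)^*=-\eps^j$ (visible already in the $1$-dimensional example), reversing the product and collecting signs shows $u^*u=1$ in both parity cases. Next I would compute $u\e_j u^{-1}$ and $u\eps^j u^{-1}$ directly, by pulling the relevant generator past each factor of $u$ using the anticommutation relations. In the even case $u=\eps^1\cdots\eps^n$: any $\e_j$ anticommutes with every factor, giving $u\e_j u^{-1}=(-1)^n\e_j=\e_j$, while factoring $u=(-1)^{j-1}\eps^j v$ with $v$ a product of $n-1$ elements that each anticommute with $\eps^j$ yields $u\eps^j u^{-1}=(-1)^{n-1}\eps^j=-\eps^j$. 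The odd case $u=\e_1\cdots\e_n$ is symmetric and gives the same final signs. Hence $u(i\e_j\eps^j)u^{-1}=-i\e_j\eps^j$, and so
\[
uPu^*=\prod_j \tfrac12(1+i\e_j\eps^j)=P^\vee,
\]
the key identity that makes the conjugation map $\cS\to\cS^\vee$ rather than $\cS\to\cS$.

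Given this, I would define $\cU(aP):=uaPu^*=(uau^*)P^\vee$, which lies in $\cl(\ft^*\times\ft)P^\vee=\cS^\vee$; an inverse is given by conjugation by $u^*$. The formula $\cU a\cU^*=uau^*$ (with $a$ acting by Clifford multiplication) is then immediate from unwinding the definitions, and substituting $a=\e_j,\eps^j$ recovers $\cU\e_j\cU^*=\e_j$ and $\cU\eps^j\cU^*=-\eps^j$. Unitarity with respect to the canonical inner products $\la aP,bP\ra=Pa^*bP$ and $\la cP^\vee,dP^\vee\ra=P^\vee c^*dP^\vee$ comes down to the identity $P^\vee(ua^*u^*)(ubu^*)P^\vee=u(Pa^*bP)u^*$, which holds because $u^*P^\vee=Pu^*$ by step~1, and because scalars in $P\cl P\cong\C$ are preserved under the identification $P^\vee\cl P^\vee\cong\C$.

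For $W$-equivariance I would use that, since the $\eps^j$ form an orthonormal basis of $\ft^*$, any $w\in W$ acts on the Clifford volume element $u$ by the determinant of its action on $\ft^*$ (or on $\ft$, in the odd case), so $w\cdot u=\pm u$ and likewise $w\cdot u^*=\pm u^*$ with the same sign. The two signs cancel in the conjugation, giving $w\cdot(uau^*)=u(w\cdot a)u^*$; hence $\cU\circ w=w\circ \cU$.

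The only mildly delicate point is the parity bookkeeping that shows $uPu^*=P^\vee$ together with the sign cancellation for orientation-reversing $w\in W$; once those are in hand the rest is formal.
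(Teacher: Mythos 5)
Your proposal is correct and follows essentially the same route as the paper: count (anti)commutations of $u$ with the generators to get $uPu^*=P^\vee$, define $\cU$ as conjugation by $u$ with inverse conjugation by $u^*$, check unitarity against the canonical inner products $\la aP,bP\ra=Pa^*bP$, and obtain $W$-equivariance from $w\cdot u=\det(w)u$ with the two determinant signs cancelling. The only differences are cosmetic (you verify $u^*u=1$ explicitly and order the sign computations slightly differently), so no changes are needed.
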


\begin{proof}
We first note that $u$ respectively commutes and anticommutes with $\e_j$, $\eps^j$ (there being respectively an even or odd number of terms in $u$ which anticommute with $\e_j$, $\eps^j$). It follows that $uPu^*=P^\vee$, hence conjugation by $u$ maps $\cS$ to $\cS^\vee$.

Denoting by $\pi:\C P\to \C$ the identification of $\C P$ with $\C$, the inner product on $\cS$ is given by $\la s_1,s_2\ra= \pi(s_1^*s_2)$ while the inner product on $\cS^\vee$ is given by $\la s_1^\vee,s_2^\vee\ra= \pi(u^*(s_1^\vee)^*s_2^\vee u)$. Thus
$$\la usu^*,s^\vee\ra=\pi(u^*(usu^*)^*s^\vee u)=\pi(s^*u^*s^\vee u)=\la s,u^*s^\vee u\ra$$so $\cU^*$ is conjugation by $u^*$ which inverts $\cU$ establishing that $\cU$ is unitary.

We now check that $\cU$ is $W$-equivariant. In the case that $\ft$ is even-dimensional, we note that identifying $\cl(\ft^*)$ with the exterior algebra of $\ft^*$ (as a $W$-vector space), $u$ corresponds to the volume form on $\ft^*$ so $w\cdot u=\det (w)u$. Similarly in the odd dimensional case $u$ corresponds to the volume form on $\ft$ and again the action of $w$ on $u$ is multiplication by the determinant. Thus
$$w\cdot\cU(s)=w\cdot (usu^*)=(w\cdot u)(w\cdot s)(w\cdot u^*)=\det(w)^2\,u(w\cdot s)u^*=\cU(w\cdot s)$$
since $\det(w)=\pm1$.

Finally for $s^\vee\in\cS^\vee$ and $a\in\cl(\ft\times\ft^*)$ we have
$$\cU a\cU^*s^\vee=\cU(au^*s^\vee u)=uau^*s^\vee$$
and hence $\cU \e_j\cU^*=u\e_ju^*=\e_j$, $\cU \eps^j\cU^*=u\eps^ju^*=-\eps^j$.
\end{proof}

Since $\cF\otimes\cU$ is a $W$-equivariant unitary isomorphism from $L^2(\ft)\tensor \cS$ to $L^2(\ft^*)\tensor \cS^\vee$, the triple $(L^2(\ft)\tensor \cS,\rho,Q_0)$ representing $\cQ$ is isomorphic to the Kasparov triple
$$(L^2(\ft^*)\tensor \cS^\vee,(\cF\otimes \cU)\rho(\cF^*\otimes \cU^*),(\cF\otimes u)Q_0(\cF^*\otimes \cU^*)).$$

\begin{theorem}\label{theta^* fa^vee}
Let $\theta:C(T^\vee)\tensor C(T)\to C(T)\tensor C(T^\vee)$ be defined by
$$\theta(g\otimes f)=f\otimes (g\circ \epsilon).$$
where $\epsilon$ is the involution on $T^\vee$ defined by $\epsilon(\exp(\eta))=\exp(-\eta)$. Then $\cQ=\theta^*\cQ^\vee$ in $KK_W(C(T^\vee)\tensor C(T),\C)$.
\end{theorem}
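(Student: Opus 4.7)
The plan is to exhibit a $W$-equivariant unitary $\Phi\colon L^2(\ft)\tensor\cS\to L^2(\ft^*)\tensor\cS^\vee$ that conjugates the cycle $(L^2(\ft)\tensor\cS,\rho,Q_0)$ representing $\cQ$ onto the cycle $(L^2(\ft^*)\tensor\cS^\vee,\rho^\vee\circ\theta,Q_0^\vee)$ representing $\theta^*\cQ^\vee$. The unitary $\cF\otimes\cU$ from the preceding lemma is almost correct, but a direct calculation shows it sends $Q_0$ to $-Q_0^\vee$ and sends $\rho$ to a representation that differs from $\rho^\vee\circ\theta$ by a sign flip on $\ft^*$. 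Both defects are cured simultaneously by composing with the parity operator $V\colon L^2(\ft^*)\to L^2(\ft^*)$, $(V\xi)(\eta)=\xi(-\eta)$, which is precisely the quantization of the involution $\epsilon$ featuring in the definition of $\theta$. So take $\Phi=(V\cF)\otimes\cU$.

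First I would verify $W$-equivariance of $\Phi$: $\cF$ intertwines the $W$-representations on $L^2(\ft)$ and $L^2(\ft^*)$ because $W$ acts linearly by isometries; $V$ commutes with the $W$-action since $\eta\mapsto-\eta$ commutes with any linear map; and $\cU$ was shown to be $W$-equivariant in the preceding lemma. Next I would compute $\Phi Q_0\Phi^*$. Combining the Fourier identities $\cF\frac{\partial}{\partial y^j}\cF^* = -2\pi i\eta_j$ and $\cF y^j\cF^* = \frac{1}{2\pi i}\frac{\partial}{\partial\eta_j}$ with the sign flips $V\eta_jV^* = -\eta_j$, $V\frac{\partial}{\partial\eta_j}V^* = -\frac{\partial}{\partial\eta_j}$, and the Clifford identities $\cU\e_j\cU^* = \e_j$, $\cU\eps^j\cU^* = -\eps^j$, one obtains
\[
\Phi Q_0 \Phi^* \,=\, (2\pi i\eta_j)\otimes(-\eps^j) \,+\, \tfrac{\partial}{\partial\eta_j}\otimes\e_j \,=\, Q_0^\vee.
\]

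Finally I would check $\Phi\rho(\cdot)\Phi^* = \rho^\vee\circ\theta$ on the two families of generators of $C(T^\vee)\tensor C(T)$. For $e^{2\pi i\la\eta,\gamma\ra}\otimes 1$ with $\gamma\in\Gamma$, the identities $\cF L_\gamma\cF^* = M_{e^{2\pi i\la\eta,\gamma\ra}}$ and $VM_{e^{2\pi i\la\eta,\gamma\ra}}V^* = M_{e^{-2\pi i\la\eta,\gamma\ra}}$ produce precisely $\rho^\vee(1\otimes e^{-2\pi i\la\eta,\gamma\ra}) = (\rho^\vee\circ\theta)(e^{2\pi i\la\eta,\gamma\ra}\otimes 1)$; for $1\otimes e^{2\pi i\la\chi,x\ra}$ with $\chi\in\Gamma^\vee$, the identities $\cF M_{e^{2\pi i\la\chi,x\ra}}\cF^* = L^\vee_{-\chi}$ and $VL^\vee_{-\chi}V^* = L^\vee_\chi$ produce $\rho^\vee(e^{2\pi i\la\chi,x\ra}\otimes 1) = (\rho^\vee\circ\theta)(1\otimes e^{2\pi i\la\chi,x\ra})$. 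Unitary equivalence of the two Kasparov cycles under the $W$-equivariant $\Phi$ then gives the required equality in $KK_W(C(T^\vee)\tensor C(T),\C)$. The main obstacle is purely bookkeeping: once one recognises that the involution $\epsilon$ in $\theta$ is matched on the Hilbert space side by the parity operator $V$, every identity falls out from the Fourier and Clifford computations.
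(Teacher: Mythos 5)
Your proof is correct and takes essentially the same route as the paper: conjugate the unbounded cycle $(L^2(\ft)\tensor\cS,\rho,Q_0)$ by a $W$-equivariant Fourier--Clifford unitary and verify on the generators $e^{2\pi i\la\eta,\gamma\ra}$ and $e^{2\pi i\la\chi,x\ra}$ that one obtains exactly the cycle $(L^2(\ft^*)\tensor\cS^\vee,\rho^\vee\circ\theta,Q_0^\vee)$ representing $\theta^*\cQ^\vee$. Your extra parity operator $V$ is just a repackaging of the Fourier sign convention --- $V\cF$ is the transform with kernel $e^{-2\pi i\la\eta,y\ra}$, which is the convention under which the paper's in-proof identities (e.g.\ $\cF\,\partial/\partial y^j\,\cF^*=2\pi i\eta_j$ and $\cF L_\gamma\cF^*=e^{-2\pi i\la\eta,\gamma\ra}$) actually hold --- so relative to the displayed $e^{+2\pi i\la\eta,y\ra}$ convention your insertion of $V$ is precisely the needed adjustment and the argument goes through as in the paper.
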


\begin{proof}
We will show that $\rho^\vee\circ\theta=(\cF\otimes \cU)\rho(\cF^*\otimes \cU^*)$ and $(\cF\otimes u)Q_0(\cF^*\otimes \cU^*)=Q_0^\vee$. We begin with the operator.

The operator $Q_0$ is given by
$$\frac{\partial}{\partial y^j}\otimes \eps^j-2\pi i y^j\otimes \e_j.$$
Conjugating the operator $\frac{\partial}{\partial y^j}$ by the Fourier transform we obtain the multiplication by $2\pi i\eta_j$, while conjugating $-2\pi i y^j$ by the Fourier transform we obtain the multiplication by $-2\pi i(\frac i{2\pi}\frac{\partial}{\partial \eta_j})=\frac{\partial}{\partial \eta_j}$. Conjugation by $\cU$ negates $\eps^j$ and preserves $\e_j$ hence
$$(\cF\otimes u)Q_0(\cF^*\otimes \cU^*)=2\pi i \eta_j\otimes (-\eps^j)+\frac{\partial}{\partial \eta_j}\otimes \e_j=Q_0^\vee.$$
For the representation, $\rho(e^{2\pi i\la \chi,x\ra})$ is multiplication by $e^{2\pi i\la \chi,x\ra}$ on $L^2(\ft)$ (with the identity on $\cS$) and conjugating by the Fourier transform we get the translation $L^\vee_\chi$, hence $(\cF\otimes \cU)\rho(e^{2\pi i\la \chi,x\ra})(\cF^*\otimes \cU^*)=\rho^\vee(e^{2\pi i\la \chi,x\ra})$.  On the other hand $\rho(e^{2\pi i \la \eta,\gamma\ra})$ is the translation $L_\gamma$ and Fourier transforming we get the multiplication by $e^{-2\pi i \la \eta,\gamma\ra}$. Thus  $(\cF\otimes \cU)\rho(e^{2\pi i \la \eta,\gamma\ra})(\cF^*\otimes \cU^*)=\rho^\vee(e^{2\pi i \la -\eta,\gamma\ra})$.

We conclude that $(\cF\otimes \cU)\rho(\cF^*\otimes \cU^*)=\rho^\vee\circ\,\theta$ as required.
\end{proof}

\begin{theorem}\label{second Kasparov product}
The Kasparov product $\cP\otimes_{C(T)}\cQ$ is $1_{C(T^\vee)}$ in the Kasparov group $KK_W(C(T^\vee),C(T^\vee))$.
\end{theorem}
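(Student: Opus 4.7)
My plan is to reduce to Theorem \ref{first Kasparov product} via the symmetry implemented by the $*$-isomorphism $\theta: C(T^\vee) \tensor C(T) \to C(T) \tensor C(T^\vee)$ introduced in Theorem \ref{theta^* fa^vee}. The constructions of $\cP$ and $\cQ$ are entirely symmetric in the pair $(T, T^\vee)$, so running them with the roles of the two tori interchanged (so $\ft^*$ replaces $\ft$, $\Gamma^\vee$ replaces $\Gamma$, etc.) produces dual classes $\cP^\vee \in KK_W(\C, C(T^\vee) \tensor C(T))$ and $\cQ^\vee \in KK_W(C(T) \tensor C(T^\vee), \C)$. Theorem \ref{first Kasparov product} then applies verbatim to this dual setup to yield
\[
\cP^\vee \otimes_{C(T)} \cQ^\vee = 1_{C(T^\vee)} \quad \text{in } KK_W(C(T^\vee), C(T^\vee)).
\]

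Theorem \ref{theta^* fa^vee} already provides the identification $\cQ = \theta^* \cQ^\vee$. The companion identification $\cP = \theta_* \cP^\vee$ would be established by a parallel argument at the Hilbert module level. Let $\cE^\vee$ denote the analogue of $\cE$ built from $C_c(\ft^*)$ in the dual construction. The Fourier transform $\cF: L^2(\ft) \to L^2(\ft^*)$ descends to an isomorphism of Hilbert modules $\cE \to \cE^\vee$, and one checks that under this isomorphism the $C(T^\vee)$-action on $\cE$ (by translation along $\Gamma$) becomes a modulation action on $\cE^\vee$ which coincides with its intrinsic $C(T^\vee)$-action composed with $\epsilon$, and dually that the $C(T)$-action on $\cE$ by multiplication becomes the translation action by $\Gamma^\vee$ on $\cE^\vee$. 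These two twists together yield precisely the $\theta$-pushforward relation.

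Finally, combining the two identifications via the naturality of the Kasparov product gives
\[
\cP \otimes_{C(T)} \cQ = (\theta_* \cP^\vee) \otimes_{C(T)} (\theta^* \cQ^\vee) = \cP^\vee \otimes_{C(T)} \cQ^\vee = 1_{C(T^\vee)};
\]
the two $\epsilon$-twists produced by $\theta$ on the external $C(T^\vee)$ factors cancel because $\epsilon$ is an involution, while the product over $C(T)$ is unaffected since $\theta$ restricts to the identity on the shared $C(T)$ subalgebras up to swap. The main obstacle is the careful verification of $\cP = \theta_* \cP^\vee$: although morally identical to the proof of Theorem \ref{theta^* fa^vee}, one must track how the sign conventions in the Fourier transform produce precisely the $\epsilon$-twist dictated by Pontryagin duality between $\Gamma$ and $T^\vee$.
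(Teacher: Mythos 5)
Your proposal is correct and follows the same overall reduction as the paper --- apply Theorem \ref{first Kasparov product} to the dual torus to get $\cP^\vee\otimes_{C(T)}\cQ^\vee=1_{C(T^\vee)}$, transport along $\theta$ using $\cQ=\theta^*\cQ^\vee$ from Theorem \ref{theta^* fa^vee}, and note that the two $\epsilon$-twists compose to the identity because $\epsilon$ is an involution while the contracted $C(T)$ factor is untouched --- but it handles the one remaining input, the identification of $\cP$ with the pushforward of $\cP^\vee$ along $\theta$ (the paper's $\cP'$), by a genuinely different method. You propose a direct verification via a Fourier-transform unitary $\cE\to\cE^\vee$ over $\theta$. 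This can indeed be made to work: in the section picture it is Poisson summation, $\sum_{\chi\in\Gamma^\vee}(\cF\phi)(-\eta-\chi)e^{2\pi i\la\chi,x\ra}=e^{-2\pi i\la\eta,x\ra}\sum_{\gamma\in\Gamma}\phi(x-\gamma)e^{2\pi i\la\eta,\gamma\ra}$ (with the covolume of $\Gamma$ normalised to $1$), and the unimodular factor cancels in the module-valued inner products; but this is the bulk of the work and your proposal leaves it as a sketch --- $\cF$ does not preserve $C_c(\ft)$, so one must pass to a Schwartz-type dense subspace, check dense range, verify compatibility with both module actions (this is exactly where the $\epsilon$ enters, through the sign in the transform of the $\Gamma$-translation action), and check $W$-equivariance of the unitary. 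The paper avoids all of this: it never computes the pushforward of $\cP^\vee$ at the module level, but sets $\cP'=(\theta^{-1})_*\cP^\vee$, notes $\cP'\otimes_{C(T)}\cQ=1_{C(T^\vee)}$, and then deduces $\cP=\cP'$ purely formally from the already established product $\cP\otimes_{C(T^\vee)}\cQ=1_{C(T)}$ by associativity and commutativity of the external Kasparov product (a left-inverse-equals-right-inverse argument). The paper's route buys the identification for free, with no further analysis; your route, once the sketched verification is written out in full, yields the stronger and independently interesting statement that the Fourier transform implements the symmetry of the Poincar\'e module, at the cost of these extra analytic checks.
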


\begin{proof}
We have $\cP\otimes_{C(T^\vee)}\cQ=1_{C(T)}$ in $KK_W(C(T),C(T))$ by Theorem \ref{first Kasparov product} while  $\cP^\vee\otimes_{C(T)}\cQ^\vee=1_{C(T^\vee)}$ in $KK_W(C(T^\vee),C(T^\vee))$ by  Theorem \ref{first Kasparov product} for the dual group.

By Theorem \ref{theta^* fa^vee} we have $\cQ^\vee=(\theta^{-1})^*\cQ$ whence
$$1_{C(T^\vee)}=\cP^\vee\otimes_{C(T)}\cQ^\vee=(\theta^{-1})_*\cP^\vee\otimes_{C(T)} \cQ.$$
Let $\cP'=(\theta^{-1})_*\cP^\vee$ in $KK_W(\C,C(T)\tensor C(T^\vee))$. Then
$$\cP=\cP\otimes_{C(T^\vee)}1_{C(T^\vee)}=\cP\otimes_{C(T^\vee)}(\cP'\otimes_{C(T)} \cQ).$$
By definition $\cP'\otimes_{C(T)} \cQ=(\cP'\otimes 1_{C(T^\vee)})\otimes_{_{\scriptstyle C(T)\otimes C(T^\vee)}} \cQ$ \,and hence 
$$\cP=(\cP\otimes\cP')\otimes_{_{\scriptstyle C(T^\vee)\tensor C(T)}} \cQ$$
by associativity of the Kasparov product. Here $\cP\otimes\cP'$ is the `external' product and lives in $KK_W(\C,C(T)\tensor C(T)\tensor C(T^\vee)\tensor C(T^\vee))$, with $\cP$ appearing in the first and last factors, and $\cP'$ in the second and third. The product with $\cQ$ is over the second and last factors. Similarly
$$\cP'=\cP'\otimes_{C(T)}(\cP\otimes_{C(T^\vee)} \cQ)=(\cP'\otimes\cP)\otimes_{_{\scriptstyle C(T)\tensor C(T^\vee)}} \cQ$$
where $\cP'$ now appears as the first and last factors and the product with $\cQ$ is over the first and third factors. Up to reordering terms of the tensor product $(\cP\otimes\cP')\otimes_{_{\scriptstyle C(T^\vee)\tensor C(T)}} \cQ=(\cP'\otimes\cP)\otimes_{_{\scriptstyle C(T)\tensor C(T^\vee)}} \cQ$.

Thus (by commutativity of the external product) $\cP=\cP'=(\theta^{-1})_*\cP^\vee$ and hence $\cP\otimes_{C(T)}\cQ=1_{C(T^\vee)}$. This completes the proof.
\end{proof}

\begin{corollary}
The elements $\cQ\in KK_W(C(T^\vee)\tensor C(T),\C)$ and $\cP\in KK_W(\C,C(T)\tensor C(T^\vee))$ exhibit a $W$-equivariant Poincar\'e duality between the algebras $C(T)$ and $C(T^\vee)$.
\end{corollary}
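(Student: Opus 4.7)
The proof is essentially a packaging of the two Kasparov product computations already carried out in the preceding subsections, together with the abstract framework of $KK$-theoretic Poincar\'e duality recalled at the beginning of Section \ref{poincare section}. Recall that for $\fG$-$C^*$-algebras $A$ and $B$, a pair $(\fa,\fb)$ with $\fa \in KK_\fG(B \tensor A,\C)$ and $\fb \in KK_\fG(\C, A \tensor B)$ exhibits a Poincar\'e duality between $A$ and $B$ precisely when the two identities \eqref{PDisom} hold. Here we take $A = C(T)$, $B = C(T^\vee)$, $\fG = W$, $\fa = \cQ$ and $\fb = \cP$, so the task reduces to verifying the two Kasparov product equations.

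The first equation, $\cP \otimes_A \cQ = \cP \otimes_{C(T)} \cQ = 1_{C(T^\vee)}$ in $KK_W(C(T^\vee), C(T^\vee))$, is exactly the content of Theorem \ref{second Kasparov product}. The second equation, $\cP \otimes_B \cQ = \cP \otimes_{C(T^\vee)} \cQ = 1_{C(T)}$ in $KK_W(C(T), C(T))$, is precisely Theorem \ref{first Kasparov product}. Together these say that the pair $(\cQ, \cP)$ satisfies the defining relations of a Poincar\'e duality pair in the $W$-equivariant category.

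Therefore my proof plan is simply to invoke Theorems \ref{first Kasparov product} and \ref{second Kasparov product} and to match them against the two identities of \eqref{PDisom}. No further computation is required, since the hard work of constructing $\cP$ as a $W$-equivariant Kasparov triple over $C(T) \tensor C(T^\vee)$ and $\cQ$ as a $W$-equivariant Kasparov triple over $C(T^\vee) \tensor C(T)$, together with the verification that they are mutually inverse under the two Kasparov products, has already been completed. There is no substantive obstacle at this final step; the corollary is a direct assembly of the previously established results into the language of the Poincar\'e duality framework, and it is therefore the desired conclusion.
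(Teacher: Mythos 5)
Your proposal is correct and coincides with the paper's own (implicit) argument: the corollary is stated as a direct consequence of Theorems \ref{first Kasparov product} and \ref{second Kasparov product}, which supply exactly the two identities \eqref{PDisom} with $A=C(T)$, $B=C(T^\vee)$, $\fa=\cQ$, $\fb=\cP$. Nothing further is needed.
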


\section{Poincar\'e duality between $C_0(\ft)\rtimes (\Gamma\rtimes W)$ and $C_0(\ft^*)\rtimes (\Gamma^\vee\rtimes W)$}

\subsection{Descent of Poincar\'e duality}\label{descent}
For $W$ a group, a Poincar\'e duality between two $W$-$C^*$-algebras $A,B$ induces a natural family of isomorphisms
\[
KK_W(A\tensor D_1,D_2)\cong KK_W(D_1,B\tensor D_2)
\]
for $W$-$C^*$-algebras $D_1,D_2$.  In other words the functor $A\tensor$ is left-adjoint to $B\tensor$ on the $KK_W$ category when there is a Poincar\'e duality from $A$ to $B$. (The symmetry of Poincar\'e dualities means that $B\tensor$ is also left-adjoint to $A\tensor$ ).  The element in $KK_W(\C,A\tensor B)$ defining the Poincar\'e duality is precisely the unit of the adjunction, while the counit is given by the element in $KK_W(B\tensor A,\C)$.  This categorical view of Poincar\'e duality appears in \cite{EEK, E, EM}.

Now let $D_1,D_2$ be $C^*$-algebras (without $W$-action).  Let $\tau$ denote the trivial-action functor from $KK$ to $KK_W$, i.e. $\tau D_1$, $\tau D_2$ are $W$-$C^*$=algebras with trivial action of $W$. In the case that $W$ is a \emph{finite} group a Poincar\'e duality yields isomorphisms
\begin{align*}
KK(A\rtime W\tensor D_1,D_2)&\cong KK_W(A\tensor \tau D_1,\tau D_2)\\&\cong KK_W(\tau D_1,B\tensor \tau D_2)\\&\cong KK(D_1,B\rtime W\tensor D_2).
\end{align*}
The first and last isomorphisms are the dual Green-Julg and Green-Julg isomorphisms respectively, and in categorical terms these amount to the fact that the $\tau$ functor is (right and left) adjoint to the descent functor $\rtime W$, see \cite{M}. We denote the unit and counit by $\alpha$ and $\beta$, for the left-adjunction from $\tau$ to $\rtime W$, and by $\widehat\alpha$ and $\widehat\beta$ for the left adjunction from $\rtime W$ to $\tau$.

Since this is natural $A\rtime W\tensor$ is left-adjoint to $B\rtime W\tensor$, hence there must exist a unit and a counit providing this descended Poincar\'e duality. We will identify these elements explicitly.

\begin{theorem}\label{descent of PD}
Let $\fa\in KK_W(B\tensor A,\C)$ and $\fb\in KK_W(\C,A\tensor B)$ define a $W$-equivariant Poincar\'e duality between $W$-$C^*$-algebras $A,B$, with $W$ finite. Then  $\widetilde\fa=\fTr(\fa\rtime W)\beta_\C\in KK(B\rtime W\,\tensor\, A\rtime W,\C)$ and $\widetilde\fb=\alpha_\C (\fb\rtimes W)\Delta\in KK(\C,A\rtime W\,\tensor\, B\rtime W)$ define a Poincar\'e duality such that the following diagram commutes
\[
\begin{CD}
KK^*_W(A,\C)@>>{\fb\tensor\!\!_A\,\text{---}}>KK^*_W(\C,B)\\
@VV\text{dual Green-Julg $\cong$}V@VV\text{Green-Julg $\cong$}V\\
KK^*(A\rtime W,\C)@>>{\widetilde\fb\tensor\!\!_{A\rtime W}\,\text{---}}>KK^*(\C,B\rtime W).
\end{CD}
\]
\end{theorem}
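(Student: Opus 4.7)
The plan is to prove this by realising the descended Poincar\'e duality as a composition of three adjunctions and then identifying the unit and counit with the stated explicit formulas. The starting observation is that Poincar\'e duality is equivalent to an adjunction: the pair $(\fb,\fa)$ exhibits $A\tensor(-)$ as left adjoint to $B\tensor(-)$ on the $KK_W$-category, with $\fb$ the unit and $\fa$ the counit. For finite $W$, the Green-Julg and dual Green-Julg theorems state precisely that the descent functor $\rtime W$ is simultaneously a left and right adjoint to the trivial-action functor $\tau$, with units/counits $\alpha,\beta$ and $\widehat\alpha,\widehat\beta$. So I would first note the natural identification $A\rtime W\tensor D\cong(A\tensor\tau D)\rtime W$, which lets us factor
\[
A\rtime W\tensor(-) \;=\; (\rtime W)\circ(A\tensor-)\circ\tau,
\]
and identically for $B$.

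Since the composition of left adjoints is left adjoint to the (reverse) composition of right adjoints, combining $\rtime W\dashv\tau$, $A\tensor\dashv B\tensor$, and $\tau\dashv{\rtime W}$ produces the adjunction $A\rtime W\tensor(-)\dashv B\rtime W\tensor(-)$ on $KK$, which is exactly a Poincar\'e duality between $A\rtime W$ and $B\rtime W$. The unit and counit of this composite adjunction are then obtained by the standard pasting recipe for composed adjunctions. Tracing the unit through, one starts with $1_{\mathbb C}$, applies the unit $\alpha_{\mathbb C}\in KK(\mathbb C,\mathbb C\rtime W)=KK(\mathbb C,C^*W)$ of $\tau\dashv\rtime W$, then the descent $\fb\rtime W\in KK(\mathbb C\rtime W,(A\tensor B)\rtime W)$ of the $W$-equivariant unit, and finally the Hopf-algebra diagonal $\Delta:(A\tensor B)\rtime W\to A\rtime W\tensor B\rtime W$ defined by $(a\otimes b)[w]\mapsto(a[w])\otimes(b[w])$; this produces $\widetilde\fb=\alpha_{\mathbb C}(\fb\rtime W)\Delta$. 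A symmetric pasting of $\beta_{\mathbb C}$, $\fa\rtime W$, and the appropriate transfer map $\fTr$ yields the counit $\widetilde\fa$.

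For the commutative diagram with $D_1=D_2=\mathbb C$, I would invoke naturality of all the adjunctions: the composite isomorphism from $KK_W^*(A,\mathbb C)$ to $KK^*(\mathbb C,B\rtime W)$ going down-then-across agrees, by construction, with the one going across-then-down, since both are obtained by pasting the three adjunctions in the same order. The main obstacle, and the step requiring genuine care, is ensuring that the abstract unit/counit of the composite adjunction truly coincide with the explicit cycles $\widetilde\fa,\widetilde\fb$ as written. This amounts to verifying that $\Delta$ and $\fTr$ are indeed the natural transformations implementing the isomorphism $(-)\rtime W\circ(\tensor)\Rightarrow(\tensor)\circ((-)\rtime W,(-)\rtime W)$ at the level of $KK$, and that the obvious compatibilities between $\alpha,\beta$ and descent hold. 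Once these naturality checks are in place, both the Poincar\'e duality identities for $(\widetilde\fa,\widetilde\fb)$ and the commutativity of the diagram follow formally.
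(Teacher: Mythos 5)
Your proposal is correct and follows essentially the same route as the paper: both compose the equivariant Poincar\'e duality adjunction $A\tensor(-)\dashv B\tensor(-)$ with the Green-Julg and dual Green-Julg adjunctions between $\tau$ and $\rtime W$ (valid for finite $W$), and then identify the unit and counit of the composite adjunction by the standard pasting recipe, recognising $\Delta$ and $\fTr$ as the descended forms of $\widehat\alpha_A\otimes 1_B$ and $\widehat\beta_B\otimes 1_A$, with the diagram's commutativity coming for free since the bottom map is the composite adjunction isomorphism. The verification you flag as the remaining obstacle (that $\Delta$ and $\fTr$ really implement the compatibility of descent with tensor products) is treated at the same level of brevity in the paper itself.
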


Here $\Delta\in KK((A\tensor B)\rtime W, A\rtime W\,\tensor\, B\rtime W)$ is given by the diagonal inclusion of $W$ into $W\times W$ and $\fTr\in  KK(B\rtime W\,\tensor\, A\rtime W,(B\tensor A)\rtime W)$ is dual to this: We define a positive linear map $\Tr:(B\tensor A)\rtimes(W\times W)\to (B\tensor A)\rtimes W$ by
\[
\Tr: (a\otimes b)[w_1,w_2]\mapsto \begin{cases}(a\otimes b)[w_1] &\text{ if }w_1=w_2\\ 0& \text{ otherwise.}\end{cases}
\]
This is a $(B\tensor A)\rtimes W$-module map and we equip the algebra $(B\tensor A)\rtimes(W\times W)$ with inner product in $(B\tensor A)\rtimes W$ defined by
\[
\left\la (b\otimes a)[w_1,w_2],(b'\otimes a')[w_1',w_2']\right\ra_{\Tr}=\Tr\left([w_1^{-1},w_2^{-1}](b^*\otimes a^*)(b'\otimes a')[w_1',w_2']\right).
\]
The completion of this as a Hilbert module, equipped with the left multiplication representation of $(B\tensor A)\rtimes(W\times W)$ provides the required element $\fTr\in KK((B\tensor A)\rtimes(W\times W),(B\tensor A)\rtimes W)$.

\bigskip

To identify the unit and counit $\widetilde\fb$ and $\widetilde\fa$ one proceeds as follows. The unit $\widetilde\fb$ is the image of the identity $1\!_{A\rtime W}$ under the isomorphism $KK(A\rtime W,A\rtime W)\cong KK(\C,B\rtime W\,\tensor\, A\rtime W)$. This is the composition of the dual Green-Julg, equivariant Poincar\'e duality, and Green-Julg maps.  The first two yield the Poincar\'e dual of the unit $\widehat{\alpha}_A$. One must then descend this and pair with the unit $\alpha_\C$. Hence $\widetilde\fb=\alpha_\C(b(\widehat{\alpha}_A\otimes 1_B))\rtime W=\alpha_\C(b\rtime W)((\widehat{\alpha}_A\otimes 1_B)\rtime W)$ by naturality of descent.  It is not hard to identify $(\widehat{\alpha}_A\otimes 1_B)\rtime W$ as the element $\Delta$.

Similarly the counit $\widetilde\fa$ is the image of the identity $1\!_{B\rtime W}$ under the isomorphism $KK(B\rtime W,B\rtime W)\cong KK(A\rtime W\,\tensor\, B\rtime W,\C)$. This is the composition of the Green-Julg, equivariant Poincar\'e duality, and dual Green-Julg maps, hence $\widetilde\fa$ is obtained by taking the Poincar\'e dual of the counit ${\beta}_B$, descending, and applying the counit  $\widehat{\beta}_\C$. We have $\widetilde\fa=((\widehat{\beta}_B\otimes 1_A)a)\rtime W)\widehat{\beta}_\C=((\widehat{\beta}_B\otimes 1_A)\rtime W)(a\rtime W)\widehat{\beta}_\C$. A change of variables identifies $(\widehat{\beta}_B\otimes 1_A)\rtime W$ with $\fTr$.

\begin{remark}\label{btilde remark}
Given a Kasparov triple $(\cE,1,D)$ representing $\fb$ we can describe explicitly a triple $(\widetilde\cE,\widetilde\alpha_\C,D\otimes 1)$ for $\widetilde{\fb}$.

The module $\widetilde{\cE}$ is given by descending $\cE$ and inflating the action of $W$ to $W\times W$. Explicitly $\widetilde{\cE}$  is the completion of $\cE\tensor \C[W\times W]$ with respect to the inner product
$$\la \xi\otimes [w_1,w_2],\xi'\otimes [w_1',w_2']\ra=(w_1^{-1},w_2^{-1})\cdot\la \xi,\xi'\ra[w_1^{-1}w_1',w_2^{-1}w_2'].$$
The operator is simply $D\otimes 1$ on $\widetilde\cE$.

The representation $\widetilde\alpha_\C$ of $\C$ on $\widetilde\cE$ takes $1$ to the projection corresponding to the trivial representation of $W$, where $W$ acts diagonally on $\widetilde\cE$ -- the unit $\alpha_\C\in KK(\C,(\tau\C)\rtime W)$ is given by inclusion of $\C$ as the trivial representation in $\C[W]=(\tau \C)\rtime W$.
\end{remark}

\subsection{Proof of Theorem \ref{PD}}\label{Proof of TheoremPD}

Theorem \ref{PD} follows from Theorem \ref{descent of PD} by consideration of the following diagram.

\[
\begin{CD}
KK^*_W(C(T),\C) @>{\cP\tensor\!\!_{C(T)}\,\text{---}}>> KK^*_W(\C,C(T^\vee)).\\
@V\cong V\text{Morita}V @V\cong V\text{Morita} V \\
KK^*_W(C_0(\ft)\rtimes \Gamma,\C) @>{\fb\tensor\!\!_{C_0(\ft)\rtimes \Gamma}\,\text{---}}>> KK^*_W(\C,C_0(\ft^*)\rtimes\Gamma^\vee)\\
@V\cong V\text{dual Green-Julg}V @V\cong V\text{Green-Julg}V \\
KK^*(C_0(\ft)\rtimes (\Gamma\rtimes W),\C) @>{\widetilde\fb\tensor\!\!_{C_0(\ft)\rtimes (\Gamma\rtimes W)}\,\text{---}}>> KK^*(\C,C_0(\ft^*)\rtimes(\Gamma^\vee\rtimes W))\\
\end{CD}
\]

Composition of $\cP$ with the Morita equivalences and of $\cQ$ with the inverse Morita equivalences, yields a $W$-equivariant Poincar\'e duality between $C_0(\ft)\rtimes \Gamma$ and $C_0(\ft^*)\rtimes\Gamma^\vee$ inducing the middle arrow. 

To determine the element $\fb$ explicitly, recall that  $\cP$ is given by the Hilbert module of functions $\sigma: \ft\times \ft^*\rightarrow \C$ which are $\Gamma^\vee$ periodic in the second variable and satisfying 
\[
\sigma(\gamma+x, \eta)=e^{2\pi i\la \eta,\gamma\ra} \sigma(x, \eta).
\]
This module is equipped with the inner product
$$\la \sigma_1, \sigma_2\ra(x,\eta)= \ol{\sigma_1(x,\eta)}\sigma_2(x,\eta).$$

The Morita equivalence from $C(T)$ to $C_0(\ft)\rtimes\Gamma$ is given by the completion of $C_c(\ft)$ with respect to the inner product 
\[
\la \phi_1,\phi_2\ra=\sum\limits_{\gamma\in \Gamma}\ol{\phi_1}\,(\gamma\cdot \phi_2)[\gamma],
\]
and similarly for $C(T^\vee)$.

It follows that $\fb$ is given by the Hilbert module completion of $C_c(\ft\times \ft^*)$ with respect to the inner product

\[
\la \theta_1,\theta_2\ra=\sum\limits_{(\gamma,\chi) \in \Gamma\times \Gamma^\vee}\ol{\theta_1}\,\left((\gamma,\chi)\cdot\theta_2\right)
e^{2\pi i\la \eta,\gamma\ra} [(\gamma,\chi)].
\]

Applying Theorem \ref{descent of PD} yields the bottom arrow. Here we identify $(C_0(\ft)\rtimes \Gamma)\rtimes W$ with $C_0(\ft)\rtimes (\Gamma\rtimes W)$ and $(C_0(\ft^*)\rtimes\Gamma^\vee)\rtimes W)$ with $C_0(\ft^*)\rtimes(\Gamma^\vee\rtimes W)$. As noted in Remark \ref{btilde remark} the element $\widetilde \fb$ has Hilbert module obtained by descending the module and inflating the $W$ action to $W\times W$. 

In conclusion we obtain the module by completing $C_c(\ft\times \ft^*)\rtimes (W\times W)$ with respect to the inner product 
\[
\la \theta[w_1, w_2], \theta'[w_1',w_2']\ra= (w_1,w_2)^{-1}\cdot \la\theta,\theta'\ra[w_1^{-1}w_1',w_2^{-1}w_2'],
\]
where $\la\theta,\theta'\ra$ is the inner product on $C_c(\ft\times\ft^*)$ defined above which is equipped with the representation of $\C$ given by the trivial projection in $\C[W]$, where $W$ acts diagonally on all factors.

\subsection{Proof of Theorem \ref{PD2}}

Theorem \ref{PD2} follows from Theorem \ref{descent of PD} by the consideration of the following diagram.

\[
\begin{CD}
KK^*_W(C(T),\C) @>{\cP\tensor\!\!_{C(T)}\,\text{---}}>> KK^*_W(\C,C(T^\vee)).\\
@V\cong V\text{Fourier-Pontryagin}V @V\cong V\text{Fourier-Pontryagin} V \\
KK^*_W(C^*(\Gamma^\vee),\C) @>{\fb\tensor\!\!_{C^*(\Gamma^\vee)}\,\text{---}}>> KK^*_W(\C,C^*(\Gamma))\\
@V\cong V\text{dual Green-Julg}V @V\cong V\text{Green-Julg}V \\
KK^*(C^*(\Gamma^\vee\rtimes W),\C) @>{\widetilde\fb\tensor\!\!_{C^*(\Gamma^\vee\rtimes W)}\,\text{---}}>> KK^*(\C,C^*(\Gamma\rtimes W))\\
\end{CD}
\]

\medskip
\noindent Composition of $\cP$ and of $\cQ$ with the Fourier-Pontryagin isomorphisms yields a $W$-equivariant Poincar\'e duality between $C^*(\Gamma^\vee)$ and $C^*(\Gamma)$ inducing the middle arrow. Applying Theorem \ref{descent of PD} yields the bottom arrow. Here we identify $C^*(\Gamma^\vee)\rtimes W$ with $C^*(\Gamma^\vee\rtimes W)$ and $C^*(\Gamma) \rtimes W$ with $C^*(\Gamma \rtimes W)$.

\subsection{The connection with the Baum Connes assembly map}\label{baumconnesconnection}\ 

The Poincar\'e duality isomorphism  appearing in Theorem \ref{PD}
\[
KK^*(C_0(\ft)\rtimes (\Gamma\rtimes W),\C)\longrightarrow KK^*(\C,C_0(\ft^*)\rtimes(\Gamma^\vee\rtimes W))
\]
can be identified with the Baum Connes assembly map for the group  $\Gamma\rtimes W$ in a sense made explicit by the following diagram. Note that while we have suppressed the indices, these are degree $0$ maps of $\Z_2$-graded groups. 
\bigskip

\noindent\begin{small}
\begin{tikzcd}
KK_{\Gamma\rtime W}(C_0(\ft), \C)
\arrow{r}{\textrm{Baum-Connes}}
\arrow[bend right=75]{dddd}{}
\arrow{dd}{\textrm{dual Green-Julg}}
&
KK(\C, C^*(\Gamma\rtime W))
\arrow{dd}{\textrm{Morita equivalence}}
\\
\\
KK(C_0(\ft)\rtime(\Gamma\rtime W),\C)
\arrow{r}{\textrm{Poincar\'e duality}}
&
KK(\C,C_0(\ft^*)\rtime(\Gamma^\vee\rtime W))
\\
\\
KK(C_0(\ft)\rtime(\Gamma\rtime W), C^*(\Gamma\rtime W))
\arrow{uu}{\times\widehat\beta_\C}
\arrow{r}{\textrm{P. d.}}
&
KK(\C,C_0(\ft^*)\rtime(\Gamma^\vee\rtime W) \otimes C^*(\Gamma\rtime W)) 
\arrow{uu}{\times\widehat\beta_\C}
\\
\end{tikzcd}
\end{small}

The curved arrow is the descent map.
Note that since $\Gamma\rtimes W$ is amenable the full and reduced $C^*$-algebras agree. The counit $\widehat\beta_\C\in KK((\tau\C)\rtime \Gamma \rtime W),\C)=KK(C^*(\Gamma\rtime W),\C)$ is given by the trivial representation of the group $\Gamma\rtimes W$. This element  has the effect of collapsing the coefficients $C^*(\Gamma\rtimes W)$. 

The upper and lower Poinca\'re dualities in the diagram are both provided by Theorem \ref{PD}, in the lower case with the coefficients $C^*(\Gamma\rtimes W)$, and the element inducing the map from $K$-homology to $K$-theory is described in detail in Section \ref{Proof of TheoremPD}.

Clearly the lower square commutes by associativity of the Kasparov product, while the left hand triangle commutes by definition. Therefore, to show that the Baum Connes assembly map corresponds to the upper Poincar\'e duality it suffices to show that the outer pentagon is commutative.

By definition the assembly map is the composition of descent with a Kasparov product. We will use the notation $\alphaCt$ to denote the relevant element of $KK(\C, C_0(\ft)\rtimes\Gamma\rtimes W)$, which is given by the Hilbert module obtained by completing $C_c(\ft)$ with respect to the inner product

\[
\langle f, f'\rangle = \sum\limits_{\gamma, w}\overline{f}((\gamma w)\cdot f)[\gamma w].
\]

We thus have the following diagram, where the bottom arrow is our Poincar\'e duality.
\bigskip
 
\noindent\hskip-.7em\begin{small}
\begin{tikzcd}
KK_{\Gamma\rtimes W}(C_0(\ft), \C)
\arrow{r}{\textrm{Baum-Connes}}
\arrow[swap]{dddd}{\textrm{descent}}
&
KK(\C, C^* (\Gamma\rtimes W))
\arrow{dd}{\textrm{Morita equivalence}}
\\
&
\\
&
KK(\C,C_0(\ft^*)\rtimes(\Gamma^\vee\rtimes W))
\\
&
\\
KK(C_0(\ft)\rtimes(\Gamma\rtimes W), C^*(\Gamma\rtimes W))
\arrow{ruuuu}{\mbox{\raisebox{3ex}{\LARGE$\circlearrowleft$}}\quad
\alphaCt\times}[swap]{\mbox{\quad\strut\raisebox{-3ex}{\LARGE$\cancel{\circlearrowleft}$}}}
\arrow{r}{}
&
KK(\C,C_0(\ft^*)\rtimes(\Gamma^\vee\rtimes W) \otimes C^*(\Gamma\rtimes W))
\arrow{uu}{\times\widehat\beta_\C}
\\
\end{tikzcd}
\end{small}

\noindent The upper triangle commutes by definition of the assembly map, however, it should be noted that the lower quadrilateral does not commute: the two directions around the quadrilateral collapse different algebras.  It is thus not entirely obvious that the outer pentagon itself commutes. However we will show that the quadrilateral does commute on the image of the descent map so that the outer pentagon commutes as required.

We start with a Kasparov cycle $(H,\rho, T)\in KK_{\Gamma\rtimes W}(C_0(\ft), \C)$. Note that since the action of $\Gamma\rtimes W$ on $\ft$ is proper we may, without loss of generality, take $T$ to be exactly invariant and of finite propagation. Now we descend to get $(\cE, \hat\rho, T\otimes 1)$, where $\cE = H\tensor C^*\Gamma\rtimes W$ and $\hat\rho$ is a representation defined by $\hat\rho(f[g])=\rho(f)\pi(g)\otimes [g]$, ($\pi$ denotes the representation of $\Gamma\rtimes W$ on $H$).

Applying our Poincar\'e duality, given by the completion of $C_c(\ft\times \ft^*) \rtimes (\Gamma\times\Gamma^\vee)$ described in Section \ref{Proof of TheoremPD}, along with with representation of $\C$ given by the trivial representation of $W$,  we obtain a Kasparov triple as follows:

Let $H_c=\rho(C_c(\ft))H$. The module in our triple is the completion of $H_c\tensor C_c(\ft^*)\tensor\C[(\Gamma\rtimes W)\times W]$ with respect to the inner product 
\[
\la\xi\otimes f[(g,w)],\xi'\otimes f'[(g',w')]\ra=\sum\limits_{\delta\in\Gamma}\sum\limits_{\chi\in\Gamma^\vee}\la\xi,\delta\cdot\xi'\ra[g^{-1}\delta g']\la f[w],[\chi]e^{2\pi i\la\eta,\delta\ra}f'[w']\ra
\]
where the last inner product in the formula is taken in the algebra $C_0(\ft^*)\rtimes (\Gamma^\vee\rtimes W)$ viewed as a module over itself. The representation of $\C$ is once again given by the trivial projection in $\C[W]$ where $W$ acts diagonally on $H_c, C_c(\ft^*),\Gamma\rtimes W$ and $W$ itself.
The operator is given by $T$ on $H_c$ and by the identity on the other factors. This is a well defined adjointable operator as we took $T$ to be exactly invariant under the action of $\Gamma\rtimes W$ and of finite propagation.

Applying the element $\widehat\beta_{\C}$ reduces this to a module over $C_0(\ft^*)\rtimes (\Gamma^\vee\rtimes W)$ where the inner product is
\[
\la\xi\otimes f[(g,w)],\xi'\otimes f'[(g',w')]\ra=\sum\limits_{\delta\in\Gamma}\sum\limits_{\chi\in\Gamma^\vee}\la\xi,\delta\cdot\xi'\ra\la f[w],[\chi]e^{2\pi i\la\eta,\delta\ra}f'[w']\ra.
\]
Note that as this no longer depends on $g$ and $g'$, vectors of the form $\xi\otimes f[(g_1,w)]$ and $\xi\otimes f[(g_2,w)]$ are identified. Thus the module is really a completion of  $H_c\tensor C_c(\ft^*)\tensor\C[W]$, which we will denote $\cE_1$. Once again the representation is provided by the trivial representation of $W$, and we denote the corresponding projection on $\cE_1$ by $p_W$. The operator on $\cE_1$ is given by $T\otimes 1\otimes 1$.

We now trace the other route around the diagram. As before, starting with a Kasparov triple $(H,\rho, T)$ we obtain the descended triple $(\cE, \hat\rho, T\otimes 1)$. We next apply the element $\alphaCt$ which is given by the completion of $C_c(\ft)$ described earlier in this section. We obtain the completion of $H_c\tensor \C[\Gamma\rtimes W]$ with respect to the inner product 
\[
\la\xi[g],\xi'[g']\ra=\sum\limits_{h\in \Gamma\rtimes W}\la \xi,h\cdot\xi'\ra[g^{-1}hg'].
\]
The representation of $\C$ is given by the identity while the operator, once again, is given by $T$ on $H_c$ and the identity on the other factor.

The Hilbert module realising the descended Morita equivalence is given by completing the module $C_c(\ft^*)\rtimes W$ with respect to the inner product
\[
\la f[w],f'[w']\ra = \sum\limits_{\chi\in \Gamma^\vee}[w^{-1}] \ol f(\chi\cdot f')[\chi w']
\]
in $C_0(\ft^*)\rtimes W$.

The representation of $C^*(\Gamma\rtimes W)$ on this module is given by the representation of $\Gamma\rtimes W$ where $\left((\gamma w')\cdot f[w]\right)(\eta) = e^{2\pi i\la \eta,\gamma\ra} (w'\cdot f)(\eta)[w'w]$.  Hence applying the Morita equivalence we obtain a Kasparov triple where the module is the completion, which we denote by $\cE_2$, of $H_c\tensor C_0(\ft^*)\tensor\C[W]$ with respect to the inner product
\[
\la\xi\otimes f[w],\xi'\otimes f'[w']\ra=\sum\limits_{\delta\in\Gamma}\sum\limits_{u\in W}\sum\limits_{\chi\in \Gamma^\vee}\la \xi,(\delta u)\cdot \xi'\ra \la f[w],e^{2\pi i\la \eta,\delta\ra}[\chi u]f'[w']\ra,
\]
 the representation of $\C$ is given by the identity and the operator is given by $T$ on $H_c$ and the identity elsewhere.

To identify this triple with the Kasparov element obtained via the first route, we note that the module $\cE_2$ is isomorphic to the range of the projection $p_W$ on $\cE_1$. Indeed
\[
\la p_w(\xi\otimes f[w]), p_w(\xi'\otimes f'[w'])\ra_{\cE_1} =\frac 1{|W|}\la \xi\otimes f[w], \xi'\otimes f'[w']\ra_{\cE_2}.
\]
This completes the proof.

\section{Langlands Duality and $K$-theory}\label{the end section}

In this section we will consider the $K$-theory of the affine and extended affine Weyl groups of a compact connected semisimple Lie group.

As remarked in the introduction an extended affine Weyl group and its Langlands dual $(W_a')^\vee$ need not be isomorphic. For example the extended affine Weyl groups of $\PSU_3$ and its Langlands dual $\SU_3$ are non-isomorphic. However their group $C^*$-algebras have the same $K$-theory, see \cite{NPW}. 

In this section we will show that this is not a coincidence, indeed passing to the Langlands dual always rationally preserves the $K$-theory for the extended affine Weyl groups. In particular, where the extended affine Weyl group of the dual of $G$ agrees with the affine Weyl group of $G$ (as for $\PSU_3$) the $K$-theory for the affine and extended affine Weyl groups of $G$ agrees up to rational isomorphism.
 
\begin{theoremn}
\dualthm
\end{theoremn}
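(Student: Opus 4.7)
The plan is to combine the non-equivariant Poincaré duality of Theorem \ref{PD2} with the Rosenberg--Schochet universal coefficient theorem applied rationally. Write $W$ for the common Weyl group of $G$ and $G^\vee$, $\Gamma = \Gamma(T) = X_*(T)$ and $\Gamma^\vee = \Gamma(T^\vee) = X^*(T)$; then by definition $W_a' = \Gamma \rtimes W$ and $(W_a')^\vee = \Gamma^\vee \rtimes W$, with the $W$-action on $\Gamma^\vee$ being the dual of the action on $\Gamma$.

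\emph{Step 1 (apply Poincaré duality).} Theorem \ref{PD2} applied to the pair $(T,T^\vee)$ gives immediately a degree-zero $\Z/2$-graded isomorphism
\[
K^*(C^*((W_a')^\vee)) \;\cong\; K_*(C^*(W_a')).
\]

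\emph{Step 2 (rational UCT).} I want to replace $K$-homology on the left by $K$-theory. The group $(W_a')^\vee$ is an extension of the finitely generated free abelian group $\Gamma^\vee$ by the finite group $W$, hence amenable, so $C^*((W_a')^\vee)$ is nuclear and therefore lies in the bootstrap class to which the Rosenberg--Schochet UCT applies. Moreover, by the $W$-equivariant Fourier--Pontryagin isomorphism of Lemma \ref{gammadual}, $C^*((W_a')^\vee) \cong C(T) \rtimes W$, so its $K$-theory is finitely generated (being the equivariant $K$-theory of the compact manifold $T$ under a finite group action). The UCT then supplies short exact sequences
\[
0 \to \Ext(K_{*-1}(A),\Z) \to K^*(A) \to \Hom(K_*(A),\Z) \to 0
\]
for $A = C^*((W_a')^\vee)$. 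Tensoring with $\Q$ annihilates the $\Ext$ term (which is torsion by finite generation), and $\Hom(K_*(A),\Z)\otimes\Q \cong \Hom_\Q(K_*(A)\otimes\Q,\Q)$ has the same $\Q$-dimension as $K_*(A)\otimes\Q$. This gives a degree-preserving isomorphism
\[
K^*(C^*((W_a')^\vee)) \otimes \Q \;\cong\; K_*(C^*((W_a')^\vee)) \otimes \Q.
\]

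\emph{Step 3 (combine).} Tensoring the isomorphism of Step 1 with $\Q$ and composing with the isomorphism of Step 2 yields
\[
K_*(C^*((W_a')^\vee)) \otimes \Q \;\cong\; K^*(C^*((W_a')^\vee)) \otimes \Q \;\cong\; K_*(C^*(W_a')) \otimes \Q,
\]
as required. The only substantive points to verify are the bootstrap membership and the finite generation of the $K$-theory of the two amenable groups $W_a'$ and $(W_a')^\vee$; both follow at once from the Fourier--Pontryagin description of the group $C^*$-algebras, so there is no serious obstacle beyond book-keeping once Theorem \ref{PD2} is in hand.
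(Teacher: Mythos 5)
Your argument is correct and is essentially the paper's own proof: both combine the Poincar\'e duality of Theorem \ref{PD2} with the rational universal coefficient theorem, the only difference being that you apply the UCT directly to $C^*((W_a')^\vee)$ whereas the paper first translates everything into $W$-equivariant $K$-theory/$K$-homology of $T$ via Green--Julg and Lemma \ref{gammadual} before invoking it. One small caution: nuclearity is not known to imply bootstrap membership, so justify the UCT instead by noting that $C^*((W_a')^\vee)\cong C(T)\rtimes W$ is subhomogeneous, hence type I --- an identification you already use for finite generation.
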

\medskip

\begin{proof}   The proof combines the universal coefficient theorem with our Poincar\'e duality as follows.

We start by writing $W_a'=\Gamma\rtimes W$ and $(W_a')^\vee= \Gamma^\vee\rtimes W$. By the Green Julg theorem  and Fourier-Pontryagin duality:

\begin{equation}
K_*(C^*(W_a')^\vee)\simeq K^W_*(C^*(\Gamma^\vee)\simeq K_*^W(C(T)) = K^*_W(T).
\end{equation}

Applying the universal coefficient theorem, we have the exact sequence
\[
0 \to \Ext^1_{\Z}(K^{*-1}_W(T), \Z) \to K_*^W(T) \to \Hom(K^*_W(T),\Z) \to 0
\]
In particular the torsion-free part of $K_*^W(T)$ agrees with the torsion-free part of $K^*_W(T)$ therefore rationally we have
\begin{align}\label{K4}
K_W^*(T)\simeq K_*^W(T).
\end{align}

As in \ref{PD2} we can identify $K_*^W(T)=K_W^*(C(T))$ with $K^*(C^*(W_a')^\vee)$. The theorem now follows by applying our Poincar\'e duality from Theorem \ref{PD2} to obtain

\begin{equation}
K^*(C^*(W_a')^\vee)\cong K_*(C^*(W_a')).
\end{equation}

\end{proof}

In a subsequent paper, \cite{NPW} we construct the admissible duals for the extended affine Weyl groups of all Lie groups of type $A_n$, exhibiting these spaces as varieties which decompose as a union of spaces indexed by the representations of the Weyl group. Furthermore we show that the rational isomorphism given above is induced by a homotopy equivalence between the varieties which respects  the decomposition. The special case of $\SU(n)$ itself  was considered by Solleveld in \cite{S}.
 
 \bigskip
For the affine Weyl groups we have the following:

\begin{corollaryn}
\affinecor
\end{corollaryn}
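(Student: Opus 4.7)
The plan is to reduce both statements to Corollary \ref{duality} via Lemma \ref{cover}, using the structural fact that Langlands duality exchanges the fundamental group of a compact connected semisimple Lie group with the Pontryagin dual of its centre.

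First I would establish that if $G$ is of adjoint type then $G^\vee$ is simply connected. For a compact connected semisimple Lie group $H$ with maximal torus $S$ one has the standard identifications $\pi_1(H) \cong X_*(S)/Q^\vee$ and $\cZ(H) \cong \Hom(X^*(S)/Q,\U)$, where $Q$ and $Q^\vee$ denote the root and coroot lattices. Since Langlands duality exchanges $X^*(S) \leftrightarrow X_*(S)$ and $Q \leftrightarrow Q^\vee$, this yields $\pi_1(G^\vee) \cong \widehat{\cZ(G)}$ as finite abelian groups, so $\cZ(G) = 1$ forces $\pi_1(G^\vee) = 1$. Applying Lemma \ref{cover} to the simply connected group $G^\vee$ then gives $(W_a')^\vee = W_a'(G^\vee) = W_a(G^\vee) = W_a^\vee$, and Corollary \ref{duality} supplies the rational isomorphism $K_*(C^*(W_a^\vee)) \cong K_*(C^*(W_a'))$.

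For the second claim I would observe that the listed types $A_n, D_n, E_6, E_7, E_8, F_4, G_2$ are exactly those whose root system is isomorphic to its Langlands dual: this is automatic in the first five (simply-laced) cases and follows by direct inspection for $F_4$ and $G_2$. Combined with the first claim (so that $G^\vee$ is simply connected), this shows that $G^\vee$ has the same root datum as the simply connected cover $\widetilde G$ of $G$, and therefore $G^\vee \cong \widetilde G$ by the classification recalled in Section \ref{langlands section}. Lemma \ref{cover} then gives $W_a = W_a(G) = W_a(\widetilde G) = W_a(G^\vee) = W_a^\vee$, and chaining with the first claim produces the desired rational isomorphism $K_*(C^*(W_a)) \cong K_*(C^*(W_a'))$.

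The only non-formal input is the identification $\pi_1(G^\vee) \cong \widehat{\cZ(G)}$, which is where I expect the main care to be needed, although it is standard. Beyond this, all $K$-theoretic content is already packaged in Corollary \ref{duality}, and the rest is bookkeeping on the level of root data.
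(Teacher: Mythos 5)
Your argument is correct and follows the same route as the paper: the paper's proof simply asserts that adjoint type forces $G^\vee$ to be simply connected (so $(W_a')^\vee=W_a^\vee$ by Lemma \ref{cover}) and that for the listed self-dual types $G^\vee$ is the universal cover of $G$ (so $W_a=W_a^\vee$), then invokes Corollary \ref{duality}. You have merely filled in the standard root-datum bookkeeping ($\pi_1(G^\vee)\cong\widehat{\cZ(G)}$) behind these assertions, which is consistent with the paper.
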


\begin{proof}
If $G$ is a compact connected semisimple Lie group of adjoint type then its Langlands dual $G^\vee$ is simply connected so $(W_a')^\vee=W_a^\vee$.

In the case that $G$ is additionally of type $A_n, D_n, E_6, E_7, E_8, F_4, G_2$ the group $G^\vee$ is the universal cover of $G$ and hence $W_a=W_a^\vee$.
\end{proof}

\end{document}